\newtheorem{thm}{Theorem}
\newtheorem{lem}[thm]{Lemma}
\newtheorem{defi}[thm]{Definition}
\newtheorem{prop}[thm]{Proposition}
\newtheorem{rk}[thm]{Remark}
\newtheorem{alg}[thm]{Algorithm}
\newcommand{\rr}{{\mathbb{R}}}
\newcommand{\rd}{{\rr^3}}
\newcommand{\nn}{{\mathbb{N}}}
\newcommand{\sS}{{\mathbb{S}^2}}
\newcommand{\e}{\epsilon}
\newcommand{\vip}{\vskip.13cm}
\newcommand{\indiq}{\hbox{\rm 1}{\hskip -2.8 pt}\hbox{\rm I}}
\newcommand{\E}{\mathbb{E}}
\newcommand{\intot}{\int_0^t }
\newcommand{\intrd}{\int_{\rd}}
\newcommand{\intS}{\int_{\sS}}
\newcommand{\dd}{{\rm d}}
\newcommand{\cL}{{\mathcal{L}}}
\newcommand{\ctL}{\tilde{{\mathcal{L}}}}
\newcommand{\cA}{{\mathcal{A}}}
\newcommand{\cT}{{\mathcal{T}}}
\newcommand{\cB}{{\mathcal{B}}}
\newcommand{\cM}{{\mathcal{M}}}
\newcommand{\cP}{{\mathcal{P}}}
\newcommand{\vs}{v''}
\begin{document}

\title[Homogeneous Boltzmann equations]
{A recursive algorithm and a series expansion related to the homogeneous Boltzmann equation for hard potentials 
with angular cutoff}

\author{Nicolas Fournier}

\address{N. Fournier: Laboratoire de Probabilit\'es et Mod\`eles al\'eatoires, 
UMR 7599, UPMC, Case 188,
4 pl. Jussieu, F-75252 Paris Cedex 5, France.}

\email{nicolas.fournier@upmc.fr}

\subjclass[2000]{82C40,60K35}

\keywords{Kinetic equations, Numerical resolution, Wild's sum.}

\begin{abstract}
We consider the spatially homogeneous Boltzmann equation for hard potentials with angular cutoff. 
This equation
has a unique conservative weak solution $(f_t)_{t\geq 0}$, once the initial condition $f_0$ with finite
mass and energy is fixed.
Taking advantage of the energy conservation, 
we propose a recursive algorithm that produces a $(0,\infty)\times\rr^3$ random variable
$(M_t,V_t)$ such that $\E[M_t\indiq_{\{V_t \in \cdot\}}]=f_t$. 
We also write down a series expansion of $f_t$. Although both the algorithm and the series expansion
might be theoretically interesting
in that they explicitly express $f_t$ in terms of $f_0$,
we believe that 
the algorithm is not very efficient in practice and that the series expansion is rather intractable.
This is a tedious extension to non-Maxwellian molecules of Wild's sum \cite{W} and of 
its interpretation by McKean \cite{MK,MK2}.
\end{abstract}

\maketitle

\section{Introduction}
\setcounter{equation}{0}

We consider a spatially homogeneous gas modeled by the Boltzmann equation:
the density $f_t(v)\geq 0$ of particles with velocity $v\in \rr^3$ at time $t\geq 0$ solves
\begin{align}\label{be}
\partial_t f_t(v) = \intrd \dd v^* \int_{\sS} \dd \sigma B(v-v^*,\sigma)
\big[f_t(v')f_t(v'^*) -f_t(v)f_t(v^*)\big],
\end{align}
where
\begin{align}\label{vprime}
v'=v'(v,v^*,\sigma)=\frac{v+v^*}{2} + \frac{|v-v^*|}{2}\sigma \quad \hbox{and}\quad
v'^*=v'^*(v,v^*,\sigma)=\frac{v+v^*}{2} -\frac{|v-v^*|}{2}\sigma.
\end{align}
The cross section $B$ is a nonnegative function given by physics. 
We refer to Cercignani \cite{c} and Villani \cite{v:h} for very complete books on the subject. 
We are concerned here with hard potentials with angular cutoff: the cross section satisfies
\begin{align}\label{ass}
\left\{\begin{array}{l}
B(v-v^*,\sigma) = |v-v^*|^\gamma b(\langle \frac{v-v^*}{|v-v^*|},\sigma\rangle) 
\quad  \hbox{for some $\gamma\in[0,1]$}\\ \\
\hbox{and some bounded measurable $b:[-1,1]\mapsto [0,\infty)$.}
\end{array}\right.
\end{align}
The important case where $\gamma=1$ and $b$ is constant corresponds to a gas of hard spheres.
If $\gamma=0$, the cross section is velocity independent and one talks about Maxwellian molecules
with cutoff.

\vip

We classically assume without loss of generality that the initial mass $\intrd f_0(v) \dd v=1$
and we denote by $e_0=\intrd |v|^2 f_0(v)\dd v>0$ the initial kinetic energy. 
It is then well-known, see Mischler-Wennberg \cite{MW}, that \eqref{be} has a unique weak solution
such that for all $t\geq 0$, $f_t$ is a probability density on $\rd$ with energy $\intrd |v|^2 f_t(v)\dd v=e_0$. 
Some precise statements are recalled in the next section.

\vip

In the whole paper, we denote, for $E$ a topological space, by $\cP(E)$ (resp. $\cM(E)$) 
the set of probability measures (resp. nonnegative measures)
on $E$ endowed with its Borel $\sigma$-field $\cB(E)$. For $v,v^*\in\rd$ and $\sigma \in \sS$, we
put
\begin{equation}\label{beta}
\beta_{v,v^*}(\sigma)=b(\langle {\textstyle{\frac{v-v^*}{|v-v^*|}}},\sigma\rangle) \quad \hbox{and we observe that} 
\quad \kappa= \intS \beta_{v,v^*}(\sigma)\dd\sigma
\end{equation}
does not depend on $v,v^*$ and is given by $\kappa=2\pi \int_0^\pi b(\theta)\sin\theta \dd\theta$. 
If $v=v^*$, then we have $v'=v'^*=v$, see \eqref{vprime},
so that the definition of $\beta_{v,v}(\sigma)$ is not important, we can e.g. set
$\beta_{v,v}(\sigma)=|\sS|^{-1}\kappa$.

\vip

In the rest of this introduction, we informally recall how \eqref{be} can be solved, 
in the case of Maxwellian molecules,
by using the Wild sum, we quickly explain its interpretation by McKean,
and we write down a closely related recursive simulation algorithm. We also recall that Wild's sum
can be used for theoretical and numerical analysis of Maxwellian molecules.
Then we briefly recall how the Wild sum and the algorithm can be easily extended to 
the case of any {\it bounded} cross section, by introducing fictitious jumps. Finally,
we quickly explain our strategy to deal with hard potentials with angular cutoff.

\subsection{Wild's sum}
Let us first mention that some introductions to Wild's sum and its probabilistic interpretation by McKean
can be found in the book of Villani \cite[Section 4.1]{v:h} and in Carlen-Carvalho-Gabetta \cite{ccg,ccg2}.
Wild \cite{W} noted that for Maxwellian molecules, i.e. when $\gamma=0$, so that the cross section
$B(v-v^*,\sigma)=\beta_{v,v^*}(\sigma)$ does not depend on the relative velocity,
\eqref{be} rewrites 
$$
\partial_t f_t(v)= \kappa Q(f_t,f_t)- \kappa f_t(v)
$$
where, for $f,g$ two probability densities on $\rd$,
$Q(f,g)(v)=\kappa^{-1}\int_{\sS} f(v')g(v'^*)\beta_{v,v^*}(\sigma)\dd \sigma$.

\vip

It holds that $Q(f,g)$ is also a probability density on $\rd$, that can be interpreted as
the law of $V'=(V+V^* + |V-V^*|\sigma)/2$, where $V$ and $V^*$ are two independent 
$\rd$-valued random variables with densities $f$ and $g$ and where $\sigma$ is, conditionally on
$(V,V^*)$, a $\kappa^{-1}\beta_{V,V^*}(\sigma)d\sigma$-distributed $\sS$-valued random variable.
Wild \cite{W} proved that given $f_0$, the solution $f_t$ to \eqref{be} is given by
\begin{equation}\label{w}
f_t=e^{-\kappa t} \sum_{n\geq 1} (1-e^{-\kappa t})^{n-1} Q_n(f_0),
\end{equation}
where $Q_n(f_0)$ is defined recursively by $Q_1(f_0)=f_0$ and, for $n\geq 1$, by
$$
Q_{n+1}(f_0)=\frac 1 n \sum_{k=1}^n Q(Q_k(f_0),Q_{n-k}(f_0)).
$$

McKean \cite{MK,MK2} provided an interpretation of the Wild sum in terms of binary trees, see
also Villani \cite{v:h} and Carlen-Carvalho-Gabetta \cite{ccg}. 
Let $\cT$ be the set of all discrete finite rooted ordered binary trees. 
By {\it ordered}, we mean that
each node of $\Upsilon \in \cT$ with two children has a {\it left} child and a {\it right} child. We denote by
$\ell(\Upsilon)$ the number of leaves of $\Upsilon \in \cT$. If $\circ$ is the trivial tree
(the one with only one node: the root), we set $Q_\circ(f_0)=f_0$. If now $\Upsilon \in \cT\setminus \{\circ\}$,
we put $Q_\Upsilon(f_0)=Q(Q_{\Upsilon_\ell}(f_0), Q_{\Upsilon_r}(f_0))$, where 
$\Upsilon_\ell$ (resp. $\Upsilon_r$) is the subtree
of $\Upsilon$ consisting of the left (resp. right) child of the root with its whole progeny.
Then \eqref{w} can be rewritten as
\begin{equation}\label{wwss}
f_t=e^{-\kappa t} \sum_{\Upsilon \in \cT} (1-e^{-\kappa t})^{|\Upsilon|-1} Q_\Upsilon(f_0).
\end{equation}
In words, \eqref{wwss} can be interpreted as follows. For each $\Upsilon \in \cT$, the term
$e^{-\kappa t} (1-e^{-\kappa t})^{|\Upsilon|-1}$ is the probability that 
a typical particle has $\Upsilon$ as (ordered) collision tree, while $Q_\Upsilon(f_0)$ is the density
of its velocity knowing that it has $\Upsilon$ as (ordered) collision tree.

\vip

Finally, let us mention a natural algorithmic interpretation of \eqref{be} closely related to \eqref{wwss}.
The {\it dynamical} probabilistic interpretation of Maxwellian molecules, initiated by Tanaka \cite{t},
can be roughly summarized as follows.
Consider a typical particle in the gas. Initially, its velocity $V_0$ is $f_0$-distributed. Then,
at {\it rate} $\kappa$, that is, after an Exp$(\kappa)$-distributed random time $\tau$, it collides with another
particle: its velocity $V_0$ is replaced by $(V_0+V^*_\tau + |V_0-V^*_\tau|\sigma)/2$, where $V^*_\tau$ 
is the velocity of an independent particle {\it undergoing the same process} (stopped at time $\tau$) and
$\sigma$ is a $\kappa^{-1}\beta_{V_0,V^*_\tau}(\sigma)d\sigma$-distributed $\sS$-valued random variable. 
Then, at {\it rate} $\kappa$, it collides again, etc. This produces a stochastic process $(V_t)_{t\geq 0}$ such
that for all $t\geq 0$, $V_t$ is $f_t$-distributed.

\vip

Consider now the following recursive algorithm.

\vip

{\tt Function velocity$(t)$:

.. Simulate a $f_0$-distributed random variable $v$, set $s=0$.

.. While $s<t$ do

.. .. simulate an exponential random variable $\tau$ with parameter $\kappa$,

.. .. set $s=s+\tau$,

.. .. if $s<t$, do

.. .. .. set $v^*=$velocity$(s)$,

.. .. .. simulate a $\kappa^{-1}\beta_{v,v^*}(\sigma)d\sigma$-distributed $\sS$-valued random variable $\sigma$,

.. .. .. set $v=(v+v^* +|v-v^*|\sigma)/2 $,

.. .. end if,

.. end while.

.. Return velocity$(t)=v$.
}

\vip

Of course, each new random variable is simulated independently of the previous ones. In particular,
line 7 of the algorithm, all the random variables required to produce {\tt $v^*=$velocity$(s)$}
are independent of all that has already been simulated.

\vip

Comparing the above paragraph and the algorithm, it appears clearly that {\tt velocity}$(t)$
produces a $f_t$-distributed random variable. We have never seen this fact written precisely as it is here,
but it is more or less well-known folklore. In the present paper, we will prove such a fact, in a slightly
more complicated situation.

\vip

In spirit, the algorithm produces a binary ordered tree: each time the recursive function calls itself, 
we add a branch (on the right). So it is closely related to \eqref{wwss} and, actually, one can get
convinced that {\tt velocity$(t)$} is precisely an algorithmic interpretation of \eqref{wwss}.
But entering into the details would lead us to tedious and technical explanations.

\subsection{Utility of Wild's sum}

The Wild sum has often been used for numerical computations: one simply cutoffs \eqref{w} at some
well-chosen level and, possibly, adds a Gaussian distribution with adequate mean and covariance matrix
to make it have the desired mass and energy.
See Carlen-Salvarini \cite{cs} for a very precise study in this direction.
And actually, Pareschi-Russo \cite{pr} also managed to use the Wild sum, among many other things,
to solve numerically the {\it inhomogeneous} Boltzmann equation for non Maxwellian molecules.

\vip

A completely different approach is to use a large number $N$ of times 
the perfect simulation algorithm previously described
to produce some i.i.d. $f_t$-distributed random variables $V^1_t,\dots,V^N_t$,
and to approximate $f_t$ by $N^{-1}\sum_1^N \delta_{V^i_t}$. We believe that this is not very efficient in practice,
especially when compared to the use of a classical interacting particle system in the spirit of Kac 
\cite{k}, see e.g. \cite{fsm}. The main reason is that the computational cost of the above {\it perfect} 
simulation algorithm
increases exponentially with time, while the one of Kac's particle system increases linearly. 
So the cost to remove the bias is disproportionate.
See \cite{fg} for such a study concerning the Smoluchowski equation, which has the same structure
(at the rough level) as the Boltzmann equation.

\vip

The Wild sum has also been intensively used to study the rate of approach to equilibrium of Maxwellian molecules.
This was initiated by McKean \cite{MK}, with more recent studies by 
Carlen-Carvalho-Gabetta \cite{ccg,ccg2}, themselves followed by  Dolera-Gabetta-Regazzini
\cite{dgr} and many other authors.

\subsection{Bounded cross sections}
If $B(v-v^*,\sigma)=\Phi(|v-v^*|)b(\langle \frac{v-v^*}{|v-v^*|},\sigma\rangle)$ with $\Phi$ bounded, e.g. by $1$,
we can introduce fictitious jumps
to write \eqref{be} as $\partial_t f_t = \kappa Q(f_t,f_t) - \kappa f_t$, with 
$Q(f,g)= \kappa^{-1} \int_\sS\int_0^1 f(v'')g(v''^*) \dd a \beta_{v,v^*}(\sigma)\dd \sigma$, where
$v''= v + [v'-v]\indiq_{\{a \leq \Phi(|v-v^*|)\}}$ and something similar for $v''^*$.
Hence all the previous study directly applies, but the resulting Wild sum does not seem to
allow for a precise study the large time behavior of $f_t$, because it leads to intractable
computations.

\subsection{Hard potentials with angular cutoff}

Of course, the angular cutoff (that is, we assume that $\kappa < \infty$)
is crucial to hope for a perfect simulation algorithm
and for a series expansion in the spirit of Wild's sum. Indeed, $\kappa=\infty$
implies that a particle is subjected to infinitely many collisions on each finite time interval.
So our goal is to extend, at the price of many complications, the algorithm and series expansion
to hard potentials with cutoff. Since the cross section  
is unbounded in the relative velocity variable,
some work is needed.

\vip

We work with weak forms of PDEs for simplicity. First, it is classical, see e.g. \cite[Section 2.3]{v:h} 
that a family 
$(f_t)_{t\geq 0}\subset \cP(\rd)$ is a weak solution to \eqref{be} if it 
satisfies, for all reasonable $\phi\in C_b(\rd)$,
$$
\frac{d}{dt} \intrd \phi(v)f_t(\dd v)=\intrd\intrd \int_{\sS} |v-v^*|^\gamma [\phi(v')-\phi(v)] \beta_{v,v^*}(\sigma) 
\dd \sigma f_t(\dd v^*) f_t(\dd v).
$$
As already mentioned, one also has $\intrd |v|^2 f_t(\dd v) = e_0$, so that
$g_t(\dd v)= (1+e_0)^{-1}(1+|v|^2)f_t(\dd v)$ belongs to $\cP(\rd)$ for all $t\geq 0$. A simple computation
shows that,  for all reasonable $\phi\in C_b(\rd)$,
$$
\frac{d}{dt} \intrd \phi(v)g_t(\dd v)=\intrd\intrd \int_{\sS} \frac{(1+e_0)|v-v^*|^\gamma} {1+|v^*|^2}
\Big[\frac{1+|v'|^2}{1+|v|^2}\phi(v')-\phi(v)\Big] \beta_{v,v^*}(\sigma) \dd 
\sigma g_t(\dd v^*) g_t(\dd v).
$$
This equation enjoys the pleasant property that the {\it maximum rate of collision}, given by 
$\Lambda_0(v)=\kappa \sup_{v^*\in\rd}
(1+|v^*|^2)^{-1}(1+e_0)|v-v^*|^\gamma$, is finite. Hence, up to some fictitious jumps,
one is able to predict when a particle will collide from its sole velocity, knowing nothing
of the {\it environment} represented by $g_t$. The function
$\Lambda_0$ is not bounded as a function of $v$, since it resembles $1+|v|^\gamma$, 
but, as we will see, this it does not matter too much.
On the contrary, the presence of $(1+|v|^2)^{-1}(1+|v'|^2)$ in front of the gain term is problematic. It means
that, in some sense, particles are not all taken into account equally. To overcome this problem, 
we consider an equation with an additional {\it weight} variable $m\in (0,\infty)$. So we search for an equation,
resembling more the Kolmogorov forward equation of a nonlinear Markov process,
of which the solution $(G_t)_{t\geq 0} \subset \cP((0,\infty)\times\rd)$ would be such that
$\int_0^\infty m G_t(\dd m, \dd v)=g_t(\dd v)$ for all times. Then, one would recover the solution to \eqref{be}
as $f_t(\dd v) = (1+e_0)(1+|v|^2)^{-1}\int_0^\infty m G_t(\dd m, \dd v)$. All this is doable and was our initial 
strategy. However, we then found a more direct way to proceed: taking advantage of the energy conservation,
it is possible to build an equation of which the
solution $(F_t)_{t\geq 0} \subset \cP((0,\infty)\times\rd)$ is such that 
$\int_0^\infty m F_t(\dd m, \dd v)=f_t(\dd v)$ for all $t\geq 0$. And this equation is of the form
\begin{align*}
&\frac{d}{dt} \int_{(0,\infty)\times\rd}\Phi(m,v)F_t(\dd m,\dd v)=\int_{(0,\infty)\times\rd}\int_{(0,\infty)\times\rd}
\int_{\sS\times[0,1]} \Lambda(v)
\Big[\Phi(m'',\vs )  -\Phi(m,v)\Big] \\
&\hskip9cm \beta_{v,v^*}(\sigma) \dd a \dd \sigma F_t(\dd m^*,\dd v^*)F_t(\dd m,\dd v).
\end{align*}
Here $\Lambda$ is any (explicit) function dominating $\Lambda_0$,
the additional variable $a$ is here to allow for fictitious jumps, and the post-collisional characteristics
$(m'',v'')$, depending on $m,v,m^*,v^*,\sigma,a$ are precisely defined in the next section. 
The {\it perfect} simulation algorithm 
for such an equation is almost as simple as the one previously described, except that the rate of collision
$\Lambda(v)$ now depends on the state of the particle. On the contrary, this state-dependent rate 
complicates subsequently the series expansion because the time and phase variables do not separate anymore.

\subsection{Plan of the paper}

In the next section, we expose our main results: we 
introduce an equation with an additional variable $m$, state that this equation has a unique
solution $F_t$ that, once integrated in $m$, produces the solution $f_t$ to \eqref{be}.
We then propose an algorithm that perfectly simulates an $F_t$-distributed random variable,
we write down a series expansion for $F_t$ in the spirit of \eqref{wwss} and discuss briefly
the relevance of our results. The proofs are then handled: the algorithm is studied in Section \ref{sa},
the series expansion established in Section \ref{sps},
well-posedness of the equation solved by $F_t$ is checked in Section
\ref{swp}, and the link between $F_t$ and $f_t$ shown in Section \ref{sr}.

\section{Main results}

\subsection{Weak solutions}
We use a classical definition of weak solutions, see e.g. \cite[Section 2.3]{v:h}.

\begin{defi}\label{dfws}
Assume \eqref{ass} and recall \eqref{beta}. 
A measurable family $f=(f_t)_{t\geq 0} \subset \cP(\rr^3)$
is a weak solution to (\ref{be}) if for all $t\geq 0$, 
$\intrd |v|^2 f_t(\dd v)= \intrd |v|^2 f_0(\dd v)<\infty$ and for all $\phi \in C_b(\rr^3)$,
\begin{align}\label{ws}
\intrd \phi(v)f_t(\dd v) = \intrd \phi(v)f_0(\dd v) + \intot
\intrd \intrd  \cA \phi(v,v^*) f_s(\dd v^*)f_s(\dd v)\dd s,
\end{align}
where $\cA\phi(v,v^*)= |v-v_*|^\gamma \intS[\phi(v'(v,v^*,\sigma))-\phi(v)]\beta_{v,v^*}(\sigma) 
\dd \sigma$.
\end{defi}

Everything is well-defined in \eqref{ws} by boundedness of mass and energy and since 
$|\cA\phi(v,v^*)|\leq 2 \kappa ||\phi||_\infty |v-v^*|^\gamma
\leq 2 \kappa ||\phi||_\infty (1+|v|^2)(1+|v^*|^2)$.

\vip

For any given $f_0 \in \cP(\rr^3)$ such that $\intrd |v|^2 f_0(\dd v)<\infty$, 
the existence of unique weak solution starting from $f_0$
is now well-known. See Mischler-Wennberg \cite{MW} when $f_0$ has a density
and Lu-Mouhot \cite{LM} for the general case. 
Let us also mention that the conservation assumption
is important in Definition \ref{dfws}, 
since Wennberg \cite{Wenn} proved that there also solutions with increasing energy.

\subsection{An equation with an additional variable}\label{tlf}
We fix $e_0>0$ and define, for $v\in \rd$,
$$
\Lambda (v) = (1+e_0)(1+|v|^\gamma)\in [1,\infty) \quad \hbox{which satisfies}\quad
\Lambda(v) \geq \sup_{v^* \in \rd} \frac{(1+e_0)|v-v^*|^\gamma}{1+|v^*|^2}.
$$
For $v,v^*\in\rd$ and $z=(\sigma,a) \in H=\sS \times [0,1]$, we put
\begin{gather*}
\vs(v,v^*,z) =v+ \Big[v'(v,v^*,\sigma)-v\Big]\indiq_{\{a\leq q(v,v^*)\}} \in \rd,\\
\hbox{where}\quad 
q(v,v^*)=\frac{(1+e_0)|v-v^*|^\gamma}{(1+|v^*|^2)\Lambda(v)}\in [0,1].
\end{gather*}
We also introduce $E=(0,\infty)\times \rr^3$ and, for $y=(m,v)$ and $y^* =(m^*,v^*)$ in $E$ and $z\in H$,
$$
h(y,y^*,z)=\Big(\frac{mm^*(1+|v^*|^2)}{1+e_0},\vs(v,v^*,z)\Big)\in E \quad\hbox{and}\quad \Lambda(y)=\Lambda(v)
\in [1,\infty)
$$
with a small abuse of notation. We finally consider, for $y=(m,v)$ and $y^* =(m^*,v^*)$ in $E$,
$$
\nu_{y,y^*}(\dd z)=\beta_{v,v^*}(\sigma)\dd\sigma \dd a
$$ 
which is a measure on $H$ with total mass $\nu_{y,y^*}(H)=\kappa$, see \eqref{ass}.

\begin{defi}\label{GE}
Assume \eqref{ass}. A measurable family $F=(F_t)_{t\geq 0} \subset \cP(E)$ is said to solve (A) if
for all $T>0$, $\sup_{[0,T]} \int_E |v|^\gamma F_t(\dd m,\dd v)<\infty$,
and for all $\Phi \in C_b(E)$, all $t\geq 0$, 
\begin{align}\label{gew}
\int_E \Phi(y)F_t(\dd y)=\int_E \Phi(y)F_0(\dd y)+\intot \int_E\int_E \cB\Phi(y,y^*) F_s(\dd y^*)
F_s(\dd y)\dd s,
\end{align}
where $\cB\Phi(y,y^*)=\Lambda(y)\int_H[\Phi(h(y,y^*,z))-\Phi(y)]\nu_{y,y^*}(\dd z)$.
\end{defi}

All is well-defined in \eqref{gew}
thanks to the conditions on $F$ and since 
$|\cB\Phi(y,y^*)|\leq 2\kappa ||\Phi||_\infty  \Lambda(y)\leq C_\Phi(1+|v|^\gamma)$ (with the notation
$y=(m,v)$).
As already mentioned in the introduction, 
the important point is that the function $\Lambda$ does not depend on $y^*$.
Hence a particle, when in the state $y$, jumps at rate $\kappa \Lambda(y)$, independently of everything else.

\begin{prop}\label{beeu}
Assume \eqref{ass}. For any $F_0\in \cP(E)$ such that $\int_E |v|^\gamma F_0(\dd m,\dd v)<\infty$, 
(A) has exactly one solution $F$
starting from $F_0$. 
\end{prop}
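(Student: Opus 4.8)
The plan is to set up a suitable complete metric space of measure-valued curves and run a Picard-type fixed point argument, exploiting the fact that the jump rate $\kappa\Lambda(v)=\kappa(1+e_0)(1+|v|^\gamma)$ depends only on $v$ and not on the collision partner $y^*$. First I would observe that, since every solution $F$ must satisfy $\sup_{[0,T]}\int_E|v|^\gamma F_t(\dd m,\dd v)<\infty$, the natural arena is the set of measurable families $(F_t)_{t\in[0,T]}\subset\cP(E)$ with $\sup_{[0,T]}\int_E(1+|v|^\gamma)F_t(\dd y)\leq A$ for a constant $A$ to be fixed in terms of the data (this plays the role of an a priori moment bound). The key observation for closing such a bound is that the map $\cB$ applied to test functions of $v$ alone behaves well: taking $\Phi(y)=(1+|v|^\gamma)$ (suitably truncated and then passing to the limit), the gain term $\Phi(h(y,y^*,z))$ involves $v''$, which is either $v$ or $v'=v'(v,v^*,\sigma)$; since $|v'|^2+|v'^*|^2=|v|^2+|v^*|^2$ and $|v'|\leq|v|+|v^*|$ one gets $|v'|^\gamma\leq(|v|+|v^*|)^\gamma\leq |v|^\gamma+|v^*|^\gamma$ for $\gamma\in[0,1]$, so that $\cB\Phi(y,y^*)\leq \kappa\Lambda(v)\,|v^*|^\gamma$. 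Integrating against $F_s(\dd y^*)$ produces a term bounded by $C(1+\int|v|^\gamma F_s)(1+\int|v^*|^\gamma F_s)$, which via Grönwall yields a finite bound on $\sup_{[0,T]}\int(1+|v|^\gamma)F_t$ depending only on $\int(1+|v|^\gamma)F_0$ and $T$; this fixes $A=A(T,F_0)$.

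Next I would construct the contraction. On the space $\cX_A$ above, equipped with a metric such as $d_T(F,G)=\sup_{[0,T]}\|F_t-G_t\|$ where $\|\cdot\|$ is a weighted total-variation or bounded-Lipschitz distance tilted by the weight $1+|v|^\gamma$ — concretely one can use $\|\mu\|_\ast=\sup\{|\int\Phi\,\dd\mu|:\|\Phi\|_\infty\leq1,\ \mathrm{Lip}(\Phi)\leq1\}$ together with control of the $|v|^\gamma$-moment difference — I would define the solution map $\Gamma$ by declaring $\Gamma(F)$ to be the (linear, time-inhomogeneous) Markov evolution with generator frozen along $F$: that is, $(\Gamma F)_t$ solves the \emph{linear} equation $\frac{d}{dt}\int\Phi\,\dd(\Gamma F)_t=\int\int\cB\Phi(y,y^*)\,F_t(\dd y^*)(\Gamma F)_t(\dd y)$. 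Because $\cB$ here is a bona fide (sub-probability-conserving after adding the diagonal) jump operator with bounded-on-compacts rate $\kappa\Lambda(v)$, this linear equation is uniquely solvable by the standard jump-process construction (build the pure-jump Markov process whose holding rate in state $y$ is $\kappa\Lambda(v)$ and whose jump kernel sends $y$ to $h(y,y^*,z)$ with $y^*\sim F_t$, $z\sim\kappa^{-1}\nu_{y,y^*}$; then $(\Gamma F)_t$ is its time-$t$ law), and one checks $\Gamma$ maps $\cX_A$ into itself using the moment bound above. A fixed point of $\Gamma$ is exactly a solution to (A).

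For the contraction estimate I would take two curves $F,G\in\cX_A$, write test functions $\Phi$ with $\|\Phi\|_\infty,\mathrm{Lip}(\Phi)\leq1$, and estimate $\int\Phi\,\dd((\Gamma F)_t-(\Gamma G)_t)$ by splitting the difference of the two bilinear right-hand sides into a term with $(\Gamma F)_t-(\Gamma G)_t$ and a term with $F_t-G_t$; the unboundedness of $\Lambda$ forces the weight $1+|v|^\gamma$ to appear, but it is absorbed by the uniform moment bound $A$, giving $\|(\Gamma F)_t-(\Gamma G)_t\|_\ast\leq C_A\intot(\|(\Gamma F)_s-(\Gamma G)_s\|_\ast+\|F_s-G_s\|_\ast)\,\dd s$, whence by Grönwall $d_T(\Gamma F,\Gamma G)\leq C_A T\,e^{C_A T}d_T(F,G)$. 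This is a contraction for $T$ small enough depending only on $A=A(T_0,F_0)$; since $A$ is non-decreasing in the horizon, one gets existence and uniqueness on a fixed small interval, and then iterates over $[0,T_0]$ by patching, using that the moment bound propagates. The main obstacle is the bookkeeping forced by the unbounded rate $\Lambda(v)\sim1+|v|^\gamma$: one must carry the $|v|^\gamma$-moment through \emph{both} the a priori bound and the stability estimate, and be careful that the gain in the $m$-variable (the factor $m^*(1+|v^*|^2)/(1+e_0)$ in $h$) does not spoil integrability — but this is precisely why we only ever test against functions growing at most like $1+|v|^\gamma$ in $v$ and boundedly in $m$, so $m$ never needs to be controlled beyond being a probability weight. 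Alternatively, and perhaps more cleanly, one can bypass the analytic fixed point entirely and argue probabilistically: build the nonlinear jump process directly by a Picard iteration on path space (coupling successive approximations so that they jump together except on a small-probability set), the unbounded rate being handled by localizing on $\{|v|\leq R\}$ and sending $R\to\infty$ using the moment bound; uniqueness then follows from the same Grönwall estimate applied to the law. Either route reduces to the same core inequality, so I would present whichever is shorter.
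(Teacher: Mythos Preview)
Your overall plan---a Picard fixed point on curves with bounded $|v|^\gamma$-moment---is a legitimate alternative to the paper's route, which obtains existence \emph{constructively} (from the algorithm in Section~\ref{sa} or the tree expansion in Section~\ref{sps}) and then proves uniqueness by a direct total-variation stability estimate in Section~\ref{swp}. Your a priori propagation of $\int_E(1+|v|^\gamma)F_t(\dd y)$ is correct and is exactly the paper's inequality \eqref{xxx}.

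The contraction step, however, has a real gap. After splitting the bilinear difference as you describe, the term carrying $(\Gamma F)_s-(\Gamma G)_s$ in the $y$-slot contributes, for $\|\Phi\|_\infty\leq 1$, at best $\int_E\Lambda(y)\,|(\Gamma F)_s-(\Gamma G)_s|(\dd y)$, and this is \emph{not} bounded by $C_A\|(\Gamma F)_s-(\Gamma G)_s\|_\ast$ for any of the norms you list: the moment bound only gives the constant $2A$, which does not vanish at the diagonal and so cannot feed a Gr\"onwall argument. Tilting the norm by $1+|v|^\gamma$ does not help by itself, since the gain term then produces a $(1+|v|^\gamma)^2$ weight. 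What is missing is the gain--loss cancellation built into $\cB\Phi$: the loss $-\kappa\Lambda(y)\Phi(y)$ carries the \emph{same} unbounded factor $\Lambda(y)$ as the gain and, for the optimal test function, exactly kills the top-order contribution. The paper makes this rigorous via a truncation level $M$: with $u_t=\|F_t-G_t\|_{TV}$ and $u_t^\Phi=\int_E\Phi\,\dd(F_t-G_t)$ it shows $(d/dt)u_t^\Phi\leq 2\kappa\alpha_t u_t+\kappa M(u_t-u_t^\Phi)+2\kappa\e_t^M$, absorbs $-\kappa M u_t^\Phi$ through the integrating factor $e^{\kappa M t}$, takes the supremum over $\Phi$, and lets $M\to\infty$ using $\intot\e_s^M\,\dd s\to 0$. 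An equivalent fix in your framework is Duhamel: write $(\Gamma F)_t-(\Gamma G)_t=\intot P^F_{s,t}(L_F^s-L_G^s)^\ast(\Gamma G)_s\,\dd s$, where $P^F_{s,t}$ is the linear Markov semigroup (a TV contraction, which encodes the same cancellation); then only $\int_E\Lambda\,\dd(\Gamma G)_s\leq A$ appears and the estimate closes. Either way, the decisive input is this cancellation, not the moment bound you invoke.
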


We will also verify the following estimate.

\begin{rk}\label{dec}
Assume \eqref{ass}. A solution to (A) satisfies
$\sup_{t\geq 0}\int_E |v|^2 F_t(\dd m,\dd v)=\int_E |v|^2 F_0(\dd m,\dd v)$.
\end{rk}

Finally, the link with the Boltzmann equation is as follows.

\begin{prop}\label{relat}
Assume \eqref{ass}. Let $F_0\in \cP(E)$ such that 
$\int_E [|v|^\gamma+m(1+|v|^{2+2\gamma})] F_0(\dd m,\dd v)<\infty$ and let 
$F$ be the solution to (A). Introduce, for each $t\geq 0$, the nonnegative measure $f_t$ on $\rr^3$
defined by $f_t(A)=\int_E m \indiq_{\{v \in A\}}F_t(\dd m, \dd v)$ for all $A\in \cB(\rr^3)$.
If $f_0 \in \cP(\rr^3)$ and if the quantity $e_0$ used to define the coefficients of (A) is
precisely $e_0=\int_{\rr^3} |v|^2 f_0(\dd v)$, then
$(f_t)_{t\geq 0}$ is the unique weak solution to \eqref{be} starting from $f_0$.
\end{prop}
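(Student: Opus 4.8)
The plan is to identify $(f_t)_{t\geq0}$, the $m$-weighted marginal of $F_t$, as \emph{the} weak solution of \eqref{be} in the sense of Definition \ref{dfws}, and then to conclude by the uniqueness results of Mischler--Wennberg \cite{MW} and Lu--Mouhot \cite{LM}. Concretely one must check: (a) $f_t\in\cP(\rd)$ for every $t$; (b) $\intrd|v|^2 f_t(\dd v)=e_0$ for every $t$; (c) \eqref{ws} holds for every $\phi\in C_b(\rd)$; and (d) $t\mapsto f_t$ is a measurable family --- which is immediate, since for bounded $\Phi$ the map $t\mapsto\int_E\Phi\,F_t$ is measurable ($F$ being measurable) and $\intrd\phi(v)f_t(\dd v)=\lim_{k\to\infty}\int_E (m\wedge k)\phi(v)\,F_t(\dd m,\dd v)$. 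Throughout I use the defining relation $\int_E m\,g(v)\,F_t(\dd m,\dd v)=\intrd g(v)\,f_t(\dd v)$ for nonnegative measurable $g$.

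The first substantive step is to propagate the weighted moments that legitimize everything below, namely to show that for every $T>0$, $\sup_{t\in[0,T]}\int_E m(1+|v|^{2+2\gamma})F_t(\dd m,\dd v)<\infty$. This is where the hypothesis $\int_E m(1+|v|^{2+2\gamma})F_0(\dd m,\dd v)<\infty$ is used: testing (A) against truncations of $m(1+|v|^p)$ for $p\in\{2,\,2+\gamma,\,2+2\gamma\}$ yields a closed system of differential inequalities, in which the favourable loss term $-\kappa\int_E\Lambda(v)m(1+|v|^p)F_t(\dd m,\dd v)$ --- note that its $y^*$-integral only produces the factor $F_t(E)=1$, and $\Lambda(v)\gtrsim|v|^\gamma$ --- dominates the gain term by a Povzner-type estimate, the extra $(1+|v^*|^2)$-weight in the first coordinate of $h$ being cancelled by $q(v,v^*)$ on the part of the collision that moves $v$; the conditions $\gamma\leq1$ and $e_0>0$ are exactly what make the system close. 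These bounds also give the domination ($|\cB\Phi_k(y,y^*)|\leq C_\phi\,m(1+|v|^\gamma)(1+m^*(1+|v^*|^2))$, uniformly in $k$, for $\Phi_k(m,v)=(m\wedge k)\phi(v)$) needed to pass from truncated to genuine test functions in all the identities below.

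Next comes the algebraic identity at the heart of the construction. Testing (A) with $\Phi(m,v)=m\phi(v)$, $\phi\in C_b(\rd)$, one integrates out the fictitious-jump variable $a$ and uses $\Lambda(v)q(v,v^*)=(1+e_0)|v-v^*|^\gamma/(1+|v^*|^2)$ to see that the weight $m^*(1+|v^*|^2)/(1+e_0)$ absorbs the factor $(1+|v^*|^2)$ and restores the cross-section $|v-v^*|^\gamma$; a short computation gives $\cB\Phi(y,y^*)=mm^*\cA\phi(v,v^*)+\Lambda(v)\kappa\,m\phi(v)\big[m^*(1+|v^*|^2)/(1+e_0)-1\big]$, hence
\[
\int_E\int_E\cB\Phi(y,y^*)\,F_t(\dd y^*)\,F_t(\dd y)=\intrd\intrd\cA\phi(v,v^*)\,f_t(\dd v^*)\,f_t(\dd v)+\frac{\kappa}{1+e_0}\Big(\int_E\Lambda(v)\,m\phi(v)\,F_t(\dd y)\Big)\,\psi(t),
\]
where $\psi(t):=\int_E m(1+|v|^2)F_t(\dd m,\dd v)-(1+e_0)$. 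Thus \eqref{ws} for $(f_t)$ follows from (A) as soon as $\psi\equiv0$.

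Finally, to prove $\psi\equiv0$ one applies the identity with $\phi(v)=1+|v|^2$ (legitimate by the moment step). The ``$\cA$-part'' then equals $\int_E\int_E mm^*|v-v^*|^\gamma\big[K(v,v^*)-\kappa|v|^2\big]F_t(\dd y^*)F_t(\dd y)$ with $K(v,v^*)=\intS|v'(v,v^*,\sigma)|^2\beta_{v,v^*}(\sigma)\dd\sigma$; evaluating $K$ (its only nontrivial ingredient, $\intS\sigma\,\beta_{v,v^*}(\sigma)\dd\sigma$, is a fixed multiple of $(v-v^*)/|v-v^*|$) gives $K(v,v^*)-\kappa|v|^2=c_b\,(|v^*|^2-|v|^2)$ for a constant $c_b$ depending only on $b$, so this term is antisymmetric under $v\leftrightarrow v^*$ against the symmetric weight $mm^*|v-v^*|^\gamma$ and vanishes. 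Consequently $\psi'(t)=\frac{\kappa}{1+e_0}\big(\int_E\Lambda(v)m(1+|v|^2)F_t\big)\psi(t)$, with nonnegative coefficient locally bounded by the moment step, and with $\psi(0)=\int_E mF_0+\int_E m|v|^2F_0-(1+e_0)=f_0(\rd)+e_0-(1+e_0)=0$ thanks to $f_0\in\cP(\rd)$ and $e_0=\intrd|v|^2f_0(\dd v)$; hence $\psi\equiv0$ by Gronwall. Taking then $\phi\equiv1$ in the identity gives $\frac{\dd}{\dd t}\int_E mF_t\equiv0$, so $f_t(\rd)=\int_E mF_t=1$ and $f_t\in\cP(\rd)$, whence also $\intrd|v|^2f_t(\dd v)=\int_E m|v|^2F_t=(1+e_0)-1=e_0$. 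Together with \eqref{ws} (which now holds since $\psi\equiv0$) and the measurability already noted, this identifies $(f_t)$ as the unique weak solution of \eqref{be} starting from $f_0$. The main obstacle is the moment propagation described above: it is classical in spirit, but the simultaneous presence of the $m$-weight, of the $(1+|v^*|^2)$-weight, and of the fictitious jumps in the gain term makes the Povzner bookkeeping delicate, and it is precisely this that dictates the integrability assumed of $F_0$.
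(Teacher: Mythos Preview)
Your overall strategy mirrors the paper's: compute $\cB(m\phi)$ via the identity of Remark~\ref{tf}, show that $\psi(t)=\int_E m(1+|v|^2)F_t-(1+e_0)\equiv 0$ by Gronwall, and deduce \eqref{ws}. That part is correct and essentially identical to the paper's argument.

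The gap is in your moment-propagation step. You assert that testing (A) against truncations of $m(1+|v|^p)$ yields a closed system because the loss $-\kappa\Lambda(v)m(1+|v|^p)$ ``dominates the gain term by a Povzner-type estimate''. But split the gain into the genuine-collision part ($a\le q$) and the fictitious part ($a>q$). On the fictitious part $v''=v$, so the gain contributes
\[
\kappa\Lambda(v)\,(1-q(v,v^*))\,\frac{mm^*(1+|v^*|^2)}{1+e_0}\,(1+|v|^p),
\]
and after integrating in $y^*$ this is, up to the harmless factor $1-q\le 1$, exactly $\kappa\Lambda(v)m(1+|v|^p)\,\Theta_t$ with $\Theta_t=(1+e_0)^{-1}\int_E m^*(1+|v^*|^2)F_t(\dd y^*)$. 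Hence the loss dominates the fictitious gain only if $\Theta_t\le 1$, i.e.\ $\psi(t)\le 0$ --- precisely what you are trying to prove. The Povzner inequality controls only the genuine-collision part $mm^*\cA\phi(v,v^*)$; on the fictitious part the $(1+|v^*|^2)$-weight is \emph{not} cancelled by $q$. Equivalently, in the clean decomposition $\cB(m\phi)(y,y^*)=mm^*\cA\phi(v,v^*)+\kappa\Lambda(v)m\phi(v)\big(\Phi_0(y^*)-1\big)$, the second term has no sign a priori and, for $\phi(v)=1+|v|^{p}$, couples the $p$-moment to the $(p+\gamma)$-moment through $\int_E\Lambda(v)m(1+|v|^p)F_t$. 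So your ``system'' does not close without already knowing $\Theta_t\le 1$.

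The paper breaks this circularity differently (Lemma~\ref{ttc}): it uses the series expansion $F_t=\sum_{\Upsilon}\Gamma_t(J_\Upsilon)$ of Proposition~\ref{psum}. For each partial sum $F_t^k$ one first shows, by induction on the tree structure (no Gronwall, hence no circularity), that $\int_E m(1+|v|^{2+2\gamma})F_t^k<\infty$; moreover $F_t^k$ satisfies the \emph{sub}-equation \eqref{toex} with $F_t^k(E)\le 1$. These two facts let one prove $\int_E\Phi_0\,F_t^k\le 1$ first (so the bad term becomes $\le 0$), and only then run the Povzner/Gronwall argument for $\Phi_1=m(1+|v|^{2+\gamma})$, uniformly in $k$. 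You should either invoke the series expansion as the paper does, or supply a concrete device (a specific truncation that preserves the sign structure of the $(\Phi_0(y^*)-1)$-term, say) that actually closes your differential inequalities.
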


\subsection{A perfect simulation algorithm}

We consider the following procedure.

\begin{alg}\label{algo}
Fix $e_0>0$ and $F_0 \in \cP(E)$. For any $t\geq 0$ we define the following recursive function,
of which the result is some $E\times \nn$-valued random variable.

\vip

{\tt function (value$(t)$,counter$(t)$):

.. Simulate a $F_0$-distributed random variable $y$, set $s=0$ and $n=0$.

.. While $s<t$ do

.. .. simulate an exponential random variable $\tau$ with parameter $\kappa \Lambda(y)$,

.. .. set $s=s+\tau$,

.. .. if $s<t$, do

.. .. .. set $(y^*,n^*)=$(value$(s)$,counter$(s)$),

.. .. .. simulate $z\in H$ with law $\kappa^{-1}\nu_{y,y^*}$, 

.. .. .. set $y=h(y,y^*,z)$,

.. .. .. set $n=n+n^*+1$,

.. .. end if,

.. end while.

.. Return value$(t)=y$ and counter$(t)=n$.
}
\end{alg}

Of course, each time a new random variable is simulated, we implicitly assume that it is
independent of everything that has already been simulated. In particular,
line 7 of the procedure, all the random variables used to produce
{\tt $(y^*,n^*)=$(value$(s)$,counter$(s)$)} are independent of all the random variables already simulated.
By construction, {\tt counter$(t)$} is precisely the number of times the recursive function
calls itself.

\begin{prop}\label{mr}
Assume \eqref{ass}. Fix $F_0\in \cP(E)$ such that $\int_E |v|^\gamma F_0(\dd m,\dd v)<\infty$ and fix $t\geq 0$. 
Algorithm \ref{algo} a.s. stops and thus produces a couple $(Y_t,N_t)$ of random variables. 
The $E$-valued random variable
$Y_t$ is $F_t$-distributed, where $F$ is the solution to (A) starting from $F_0$.
The $\nn$-valued random variable $N_t$ satisfies $\E[N_t]\leq 
\exp(\kappa \int_0^t \int_E \Lambda(y) F_s(\dd y) ds)-1$.
\end{prop}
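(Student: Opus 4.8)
The plan is to sandwich Algorithm~\ref{algo} between depth-truncated variants and pass to the limit. For $n\ge 0$, let $(Y^n_t,N^n_t)$ denote the output of the same recursive procedure, except that a call reaching recursion depth $n$ returns immediately the $F_0$-sample $\xi$ it has just drawn, together with the counter $0$.

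First I would check, by induction on $n$, that the truncated procedure a.s.\ terminates and that $\mathrm{Law}(Y^n_t)=F^{(n)}_t$, the $n$-th Picard iterate for (A) (that is, $F^{(0)}\equiv F_0$ and $\frac{d}{dt}\int_E\Phi(y)F^{(n+1)}_t(\dd y)=\int_E\int_E\cB\Phi(y,y^*)F^{(n)}_s(\dd y^*)F^{(n+1)}_s(\dd y)$). For $n=0$ this is clear. Assuming it at level $n-1$ together with the uniform moment bound below, note that in the depth-$n$ procedure a call at depth $<n$ evolves, before its time argument is reached, as a pure-jump process whose rate after its $j$-th collision is $\kappa\Lambda(y_j)$; since $\Lambda(v)=(1+e_0)(1+|v|^\gamma)$ and kinetic energy is conserved at a genuine jump ($|v''|^2\le|v|^2+|v^*|^2$), iteration and subadditivity of $x\mapsto x^{\gamma/2}$ bound this rate by $c\,(1+\sum_{i\le j}|V^*_i|^\gamma)$ over the finitely many independent collision partners $V^*_i$ already produced, each of law $F^{(n-1)}_{s_i}$ by the induction hypothesis, hence of $\gamma$-moment $\le C_T$. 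As a pure-birth process whose rate at state $j$ grows at most linearly does not explode, each call makes finitely many recursive calls a.s.; the truncated tree is then a.s.\ finite, and the Kolmogorov forward equation of the depth-$0$ particle is exactly the displayed Picard equation with $F^{(n)}_t=\mathrm{Law}(Y^n_t)$.

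From Section~\ref{swp} I would borrow that $F^{(n)}_t\to F_t$, the unique solution of (A), and that $C_T:=\sup_n\sup_{s\le T}\int_E|v|^\gamma F^{(n)}_s(\dd m,\dd v)<\infty$. Since the truncated jump processes do not explode, the compensation formula applies to the collision times of the depth-$0$ particle; together with the fact that its $j$-th recursive call returns, given its time argument $s_j$, a counter of mean $\E[N^{n-1}_{s_j}]$, this yields $\E[N^n_t]=\kappa\int_0^t(1+\E[N^{n-1}_s])\lambda_n(s)\,\dd s$, where $\lambda_n(s)=\int_E\Lambda(y)F^{(n)}_s(\dd y)\le(1+e_0)(1+C_T)$. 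Writing $\bar\lambda_T=\kappa(1+e_0)(1+C_T)$ and iterating, $\E[N^n_t]\le e^{\bar\lambda_T t}-1$ for all $n$; the same recursion also bounds the expected number of depth-$k$ nodes of the depth-$n$ truncated tree by $(\bar\lambda_T t)^k/k!$.

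It remains to transfer this to Algorithm~\ref{algo} itself, which is the main obstacle. The key elementary remark is that a call which happens to make no recursive call returns precisely its own $F_0$-sample with counter $0$; hence, coupling all the procedures through one family of i.i.d.\ ingredients attached to the nodes of an abstract tree, the depth-$n$ and depth-$(n+1)$ truncated procedures produce the same tree and the same output on the event $D_n$ that the depth-$(n+1)$ truncated tree has no node at depth $n+1$, and $P(D_n^c)\le\E[\#\{\text{depth-}(n+1)\text{ nodes}\}]\le(\bar\lambda_T t)^{n+1}/(n+1)!$. As $\sum_n P(D_n^c)<\infty$, Borel--Cantelli gives that a.s.\ the truncations stabilise from some a.s.\ finite level on; their common eventual value is exactly the output $(Y_t,N_t)$ returned by Algorithm~\ref{algo}, which thus a.s.\ stops. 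Consequently $Y^n_t\to Y_t$ a.s., so $\mathrm{Law}(Y_t)=\lim_n F^{(n)}_t=F_t$; Fatou gives $\E[N_t]\le\liminf_n\E[N^n_t]<\infty$; and since $N_t$ is a.s.\ finite the compensation formula applies to the collision times of the main particle of the full algorithm, whose time-$s$ law is $F_s$ by the above, so that $\E[N_t]=\kappa\int_0^t(1+\E[N_s])\int_E\Lambda(y)F_s(\dd y)\,\dd s$, i.e.\ $1+\E[N_t]=\exp(\kappa\int_0^t\int_E\Lambda(y)F_s(\dd y)\,\dd s)$ — in particular the announced inequality. The delicate points are this coupling of the truncations (consecutive levels being only approximately nested) and the justification of the forward and compensation identities for the unbounded rate $\Lambda$; the non-explosion of the truncated chains, via the linear growth of the jump rate, is the technical heart of the first step.
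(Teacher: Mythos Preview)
Your approach---depth-truncating the recursion, identifying the truncated laws with Picard iterates $F^{(n)}$, and passing to the limit via a Borel--Cantelli coupling---is genuinely different from the paper's, which instead runs the full algorithm with a cemetery point $(\triangle,\infty)$, identifies its (possibly explosive) generator, and uses a direct localisation (stopping when the counter exceeds $A$, applying the generator to $\Lambda(y)\indiq_{\{n<A\}}$ and to $n\wedge A$, Gronwall, then $A\to\infty$) to rule out explosion and simultaneously identify the law as a solution to (A). Your route cleanly separates ``the algorithm stops'' from ``the limiting law solves (A)'', and the coupling observation (a depth-$n$ call making no further recursive call returns exactly what the truncated one would) is correct and nice.

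There is, however, a genuine gap. You claim to borrow from Section~\ref{swp} both the convergence $F^{(n)}_t\to F_t$ and the uniform bound $C_T=\sup_n\sup_{s\le T}\int_E|v|^\gamma F^{(n)}_s(\dd m,\dd v)<\infty$; but that section only proves \emph{uniqueness} for (A) and never mentions Picard iterates. More importantly, $C_T<\infty$ is the crux of your whole scheme---it feeds the non-explosion of each truncated chain, the bound on $\E[N^n_t]$, and the summability in Borel--Cantelli---and your own tools do not deliver it: the pathwise estimate $|v''|^\gamma\le|v|^\gamma+|v^*|^\gamma$ only yields $\cB\Lambda(y,y^*)\le\kappa(1+e_0)|v^*|^\gamma\Lambda(y)$, so Gronwall on the linear equation gives $\int_E\Lambda\,\dd F^{(n+1)}_t\le \big(\int_E\Lambda\,\dd F_0\big)\exp\big(\kappa(1+e_0)\int_0^t\int_E|v^*|^\gamma F^{(n)}_s(\dd y^*)\,\dd s\big)$, a recursion that iterates to a tower of exponentials in $n$. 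What is missing is precisely the paper's key computation \eqref{xxx}: because the acceptance probability $q(v,v^*)$ carries the factor $(1+|v^*|^2)^{-1}$, one actually has $\int_H[\Lambda(h(y,y^*,z))-\Lambda(y)]\nu_{y,y^*}(\dd z)\le\kappa(1+e_0)$ \emph{uniformly in $y^*$}, whence $\cB\Lambda(y,y^*)\le\kappa(1+e_0)\Lambda(y)$ and $\int_E\Lambda\,\dd F^{(n)}_t\le\big(\int_E\Lambda\,\dd F_0\big)\,e^{\kappa(1+e_0)t}$ for all $n$. With this in hand (and an adaptation of the Section~\ref{swp} total-variation argument to show $\sum_n||F^{(n+1)}_t-F^{(n)}_t||_{TV}<\infty$), your programme can be completed; without it, the argument does not close.
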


\subsection{A series expansion}

We next write down a series expansion of $F_t$, the solution to (A), in the spirit of Wild's sum \eqref{wwss}.
Unfortunately, the expressions are more complicated,
because the time ($t\geq 0$) and phase ($y\in E$) variables do not separate. This is due to the
fact that the jump rate $\Lambda$ depends on the state of the particle. 

\vip

For $F,G$ in $\cM(E)$, we define $Q(F,G) \in \cM(E)$ by, for all Borel subset $A\subset E$,
$$
Q(F,G)(A)= \int_E\int_E\int_H \Lambda(y) \indiq_{\{h(y,y^*,z)\in A\}} \nu_{y,y^*}(\dd z) G(\dd y^*)F(\dd y).
$$
Observe that \eqref{gew} may be written, at least formally,
$\partial_t F_t=-\kappa \Lambda F_t + Q(F_t,F_t)$, provided $F_t$ is a probability measure on $E$ 
for all $t\geq 0$. Also, note that $Q(F,G)\neq Q(G,F)$ in general.

\vip

For $J\in \cM(\rr_+\times E)$, consider the measurable family $(\Gamma_t(J))_{t\geq 0}\subset \cM(E)$ defined by
$$
\Gamma_t(J)(A)=\intot \int_E \indiq_{\{y \in A\}} e^{-\kappa \Lambda(y) (t-s)}J(\dd s,\dd y)
$$
for all Borel subset $A\subset E$.

\vip

We finally consider the set $\cT$ of all finite binary (discrete) ordered trees: such a tree is constituted of
a finite number of nodes, including the root, each of these nodes having either $0$ or two children
(ordered, in the sense that a node having two children has a {\it left} child and a {\it right} child).
We denote by $\circ \in \cT$ the trivial tree, composed of the root as only node.

\begin{prop}\label{psum}
Assume \eqref{ass}. Let $F_0 \in \cP(E)$ such that $\int_E |v|^\gamma F_0(\dd m,\dd v)<\infty$. 
The unique solution
$(F_t)_{t\geq 0}$ to (A) starting from $F_0$ is given by
$$
F_t = \sum_{\Upsilon \in \cT} \Gamma_t(J_\Upsilon(F_0)),
$$
with $J_\Upsilon(F_0) \in \cM(\rr_+\times E)$ defined by induction:
$J_\circ(F_0)(\dd t,\dd x)=\delta_0(\dd t) F_0(\dd x)$ and, if $\Upsilon \in \cT\setminus\{\circ\}$,
$$
J_\Upsilon(F_0)(\dd t,\dd x)=\dd t Q(\Gamma_t(J_{\Upsilon_\ell}(F_0)),\Gamma_t(J_{\Upsilon_r}(F_0)))(\dd x),
$$ 
where $\Upsilon_\ell$ (resp. $\Upsilon_r$) is the subtree
of $\Upsilon$ consisting of the left (resp. right) child of the root with its whole progeny.
\end{prop}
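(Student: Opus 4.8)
The plan is to verify that the series $\tilde F_t := \sum_{\Upsilon\in\cT}\Gamma_t(J_\Upsilon(F_0))$ is well-defined (as a finite nonnegative measure, in fact a probability measure) and satisfies the integral equation \eqref{gew}; uniqueness from Proposition \ref{beeu} then forces $\tilde F=F$. First I would set up a bookkeeping lemma on the operator $\Gamma$: from its definition one has, for any $J\in\cM(\rr_+\times E)$, the ``mild-to-weak'' identity
\begin{align*}
\int_E\Phi(y)\Gamma_t(J)(\dd y)
&=\intot\int_E\Phi(y)e^{-\kappa\Lambda(y)(t-s)}J(\dd s,\dd y),
\end{align*}
and differentiating in $t$ (legitimate since $\Lambda\geq 1$ makes the exponential a smooth, integrable weight) gives
\begin{align*}
\frac{d}{dt}\int_E\Phi(y)\Gamma_t(J)(\dd y)
&=\int_E\Phi(y)J(\dd t,\dd y)-\kappa\int_E\Lambda(y)\Phi(y)\Gamma_t(J)(\dd y).
\end{align*}
Summing this over $\Upsilon\in\cT$, the first term telescopes: for the root tree it produces $\int_E\Phi\,\dd F_0$ at $t=0$ (the $\delta_0(\dd t)$), and for $\Upsilon\neq\circ$ it produces $\int_E\Phi(y)\,Q(\Gamma_t(J_{\Upsilon_\ell}),\Gamma_t(J_{\Upsilon_r}))(\dd y)$. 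Since every non-trivial tree is uniquely a root with an ordered pair of subtrees $(\Upsilon_\ell,\Upsilon_r)\in\cT\times\cT$, $\sum_{\Upsilon\neq\circ}Q(\Gamma_t(J_{\Upsilon_\ell}),\Gamma_t(J_{\Upsilon_r}))=Q(\tilde F_t,\tilde F_t)$ by bilinearity of $Q$. This yields exactly $\frac{d}{dt}\int_E\Phi\,\dd\tilde F_t=-\kappa\int_E\Lambda\Phi\,\dd\tilde F_t+\int_E\Phi\,\dd Q(\tilde F_t,\tilde F_t)$, i.e. the weak form \eqref{gew} after checking that $\cB\Phi(y,y^*)=\Lambda(y)\int_H[\Phi(h(y,y^*,z))-\Phi(y)]\nu_{y,y^*}(\dd z)$ reassembles $-\kappa\Lambda\Phi$ and $Q$ as above (using $\nu_{y,y^*}(H)=\kappa$).

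The main obstacle is making this rearrangement rigorous: one must control $\sum_{\Upsilon\in\cT}J_\Upsilon(F_0)([0,t]\times E)$ and $\sum_{\Upsilon}\Gamma_t(J_\Upsilon(F_0))(E)$ to justify the term-by-term differentiation and the interchange of sum and the operators $Q,\Gamma$. I would do this by an induction on the total mass over trees of fixed size, comparing with the perfect simulation algorithm of Proposition \ref{mr}: the quantity $\Gamma_t(J_\Upsilon(F_0))(E)$ is precisely the probability that the collision tree built by {\tt value$(t)$} equals $\Upsilon$, so $\sum_\Upsilon\Gamma_t(J_\Upsilon(F_0))(E)=1$ for each $t$, and the partial sums over trees of bounded size are dominated by $1$. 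More concretely, writing $p_n(t)=\sum_{\ell(\Upsilon)\le n}\Gamma_t(J_\Upsilon(F_0))(E)$ one gets from the $\Gamma/Q$ recursion a Gronwall-type bound $\sum_{\ell(\Upsilon)=n}\Gamma_t(J_\Upsilon(F_0))(E)\le (\kappa\Lambda_\ast t)^{n}/n!$-type tails once $\Lambda$ is replaced by its (finite, since moments propagate by Remark \ref{dec} and the hypothesis $\int|v|^\gamma F_0<\infty$) integral against $F_s$ — which is exactly the content of the bound $\E[N_t]\le\exp(\kappa\int_0^t\int_E\Lambda\,dF_s\,ds)-1$ in Proposition \ref{mr}. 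This uniform summability legitimizes every interchange above. I would also need the moment condition $\sup_{[0,T]}\int_E|v|^\gamma\tilde F_t(\dd v)<\infty$ demanded by Definition \ref{GE}, which follows from the same estimates together with Remark \ref{dec}.

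With the analytic estimates in hand, the remaining work is purely formal: check that $\tilde F_0=F_0$ (only $\Upsilon=\circ$ contributes at $t=0$, giving $\Gamma_0(\delta_0\otimes F_0)=F_0$ and all other $\Gamma_0(J_\Upsilon)=0$ since $J_\Upsilon(\{0\}\times E)=0$ for $\Upsilon\neq\circ$), confirm $\tilde F_t\in\cP(E)$ for all $t$ via the total-mass identity, and conclude by uniqueness in Proposition \ref{beeu} that $\tilde F_t=F_t$. I expect the tree-combinatorics step (the unique decomposition $\Upsilon\leftrightarrow(\Upsilon_\ell,\Upsilon_r)$ and the corresponding bilinear identity for $Q$) to be routine, and the only genuinely delicate point to be the justification of differentiating the infinite sum and of Fubini in $\Gamma_t(Q(\cdot,\cdot))$, both handled by the uniform mass bounds just described.
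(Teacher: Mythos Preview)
Your overall architecture matches the paper's: show that $\tilde F_t=\sum_{\Upsilon}\Gamma_t(J_\Upsilon)$ satisfies \eqref{gew}, then invoke the uniqueness in Proposition \ref{beeu}. The mild-to-weak differentiation of $\Gamma_t$, the tree decomposition $\Upsilon\leftrightarrow(\Upsilon_\ell,\Upsilon_r)$, and the bilinearity of $Q$ are all used exactly as you describe.

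The gap is in how you propose to control the mass and the $\Lambda$-moment of the series. Both of your routes are problematic. The probabilistic identification ``$\Gamma_t(J_\Upsilon)(E)$ is the probability that the collision tree equals $\Upsilon$'' is plausible but is never proved in the paper (it is explicitly mentioned as something the paper does \emph{not} establish), and proving it rigorously is work comparable to the analytic proof itself. Your alternative analytic sketch is circular: you invoke ``the integral of $\Lambda$ against $F_s$'' and Remark \ref{dec}, but both refer to the solution of (A), which is exactly what you are trying to show $\tilde F$ is. More seriously, since $\Lambda$ is unbounded there is no constant $\Lambda_\ast$ giving factorial tails; one needs a mechanism to control how $\Lambda$ grows along collisions, and you have not identified one.

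The paper closes this gap purely analytically. It works with the truncated sums $F_t^k=\sum_{\Upsilon\in\cT_k}\Gamma_t(J_\Upsilon)$ and uses that $\Upsilon\mapsto(\Upsilon_\ell,\Upsilon_r)$ is merely \emph{injective} from $\cT_k\setminus\{\circ\}$ into $\cT_k\times\cT_k$; this turns your equality $\sum_{\Upsilon\ne\circ}Q(\cdot,\cdot)=Q(\tilde F_t,\tilde F_t)$ into the \emph{inequality} $\sum_{\Upsilon\in\cT_k\setminus\{\circ\}}Q(\Gamma_t(J_{\Upsilon_\ell}),\Gamma_t(J_{\Upsilon_r}))\le Q(F_t^k,F_t^k)$, yielding the differential inequality $(d/dt)F_t^k(E)\le\kappa(F_t^k(E)-1)\int_E\Lambda\,\dd F_t^k$, whence $F_t^k(E)\le 1$ uniformly in $k$. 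The $\Lambda$-moment is then propagated, uniformly in $k$, via the pointwise bound $\int_H[\Lambda(h(y,y^*,z))-\Lambda(y)]\nu_{y,y^*}(\dd z)\le\kappa(1+e_0)$ (this is \eqref{xxx} in the paper), which is the missing ingredient in your sketch. Only after these $k$-uniform bounds are in hand does one let $k\to\infty$ and recover the full equation; the identity $\tilde F_t(E)=1$ then comes out a posteriori from \eqref{gew} with $\Phi=1$ and Gronwall, not from a probabilistic argument.
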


We will prove this formula by a purely analytic method.
We do not want to discuss precisely its connection with Algorithm \ref{algo}, but
let us mention that in spirit, the algorithm produces a (random) ordered tree $\Upsilon_t$ of interactions 
together with the
value of $Y_t$, and that $\Gamma_t(J_\Upsilon(F_0))$ can be interpreted as the probability distribution of 
$Y_t$ restricted to the event that $\Upsilon_t=\Upsilon$.

\subsection{Conclusion}

Fix $f_0\in \cP(\rr^3)$ such that $\int_{\rr^3} |v|^{2+2\gamma}f_0(\dd v)<\infty$ and set 
$F_0= \delta_1 \otimes f_0 \in \cP(E)$, which satisfies
$\int_E [|v|^\gamma+m(1+|v|^{2+2\gamma})] F_0(\dd m,\dd v)
=\intrd (1+|v|^\gamma+|v|^{2+2\gamma}) f_0(\dd v)<\infty$.

\vip

(a) Gathering Propositions \ref{mr} and \ref{relat}, we find that
Algorithm \ref{algo} used with $e_0=\int_{\rr^3} |v|^{2}f_0(\dd v)$ and with 
$F_0$ produces a random variable $(Y_t,N_t)$, with $Y_t=(M_t,V_t)$ such that
$\E[M_t \indiq_{\{V_t \in A\}}]=f_t(A)$ for all $A \in \cB(\rr^3)$,
where $f$ is the unique weak solution to \eqref{be} starting from $f_0$.
Also, the mean number of iterations $\E[N_t]$ is bounded by $\exp[\kappa(1+e_0)(1+e_0^{\gamma/2}) t]-1$.

\vip

Indeed, we know from Proposition \ref{mr} that $\E[N_t]\leq 
\exp(\kappa \int_0^t \int_E \Lambda(y) F_s(\dd y) ds)-1$. But we have
$\int_E \Lambda(y)F_t(\dd y)=(1+e_0) \int_E (1+|v|^\gamma)F_t(\dd m,\dd v)
\leq (1+e_0)(1+ (\int_E |v|^2 F_t(\dd m,\dd v))^{\gamma/2})$, which is smaller than
$(1+e_0)(1+ (\int_E |v|^2 F_0(\dd m,\dd v))^{\gamma/2})=(1+e_0)(1+e_0^{\gamma/2})$ by Remark \ref{dec}.

\vip

(b) Gathering Propositions \ref{psum} and \ref{relat}, we conclude that
for all $t\geq 0$, all Borel subset $A\subset \rd$, we have 
$f_t(A)=\sum_{\Upsilon \in \cT} \int_E m \indiq_{\{v \in A\}}\Gamma_t(J_\Upsilon(F_0))(\dd m,\dd v)$.

\subsection{Discussion}
It might be possible to prove Proposition \ref{relat} assuming only that $F_0 \in \cP(E)$
satisfies $\int_E m(1+|v|^2) F_0(\dd m,\dd v)<\infty$ instead of
$\int_E m(1+|v|^{2+2\gamma}) F_0(\dd m,\dd v)<\infty$, since the Boltzmann equation \eqref{be}
is known to be well-posed as soon as the initial energy is finite, see \cite{MW,LM}.
However, it would clearly be more difficult and our condition is rather harmless.

\vip

Observe that (A) is well-posed under the condition that $F_0 \in\cP(E)$
satisfies $\int_E |v|^\gamma F_0(\dd m,\dd v)$, which does not at all imply that 
$e_0=\int_E m|v|^2 F_0(\dd m,\dd v)<\infty$. But, recalling that the $e_0$ has to be finite
for the coefficients of (A) to be well-defined, this is not very interesting.

\vip

The series expansion of Proposition \ref{psum} is of course much more complicated than the original
Wild sum, since (a) 
we had to add the variable $m$, (b) we had to introduce fictitious jumps, (c) time and space do not separate.
So it is not clear whether the formula can be used theoretically or numerically. However, it provides an
explicit formula expressing $f_t$ as a (tedious) function of $f_0$.

\vip

Algorithm \ref{algo} is extremely simple.
Using it a large number of times, which produces some i.i.d. sample
$(M^i_t,V^i_t)_{i=1,\dots,N}$, we may approximate $f_t$ by $N^{-1}\sum_1^N M^i_t \delta_{V^i_t}$. 
For a central limit
theorem to hold true, one needs $\E[M_t^2]=\int_E m^2F_t(\dd m,\dd v)$ to be finite. 
We do not know if this holds true, although
we have some serious doubts. Hence the convergence of this Monte-Carlo approximation may be much
slower that $N^{-1/2}$. The main interest of Algorithm \ref{algo} is thus theoretical.

\section{The algorithm}\label{sa}

Here we prove Proposition \ref{mr}. We fix $F_0 \in \cP(E)$ such that
$\int_E |v|^\gamma F_0(\dd m,\dd v)<\infty$,
which implies that $\int_E \Lambda(y) F_0(\dd y)<\infty$.
When Algorithm \ref{algo} never stops, we take the convention that it returns 
{\tt (value$(t)$,counter$(t)$)}=$(\triangle,\infty)$, where $\triangle$ is 
a cemetery point. For each $t\geq 0$, we denote by $G_t \in \cP((E\times \nn)\cup\{(\triangle,\infty)\})$
the law of the random variable produced by Algorithm \ref{algo}. 
Also, for $y \in E$, $n\in \nn$ and $z\in H$, we take the conventions that $h(y,\triangle,z)=\triangle$
and $n+\infty+1=\infty$. We arbitrarily define, for $y\in E$, $\nu_{y,\Delta}(\dd z)=|\sS|^{-1}\kappa \dd \sigma
\dd a$.

\vip

{\it Step 1.} We now consider the following procedure. It is an {\it abstract} procedure, because
it assumes that for each $t\geq 0$, one can simulate a random variable with law $G_t$
and because the instructions are repeated {\it ad infinitum} if the cemetery point is
not attained.

\vip

{\tt Simulate a $F_0$-distributed random variable $y$, set $s=0$ and $n=0$.

While $y \neq \triangle$ do ad infinitum

.. simulate an exponential random variable $\tau$ with parameter $\kappa \Lambda(y)$,

.. set $Y_t=y$ and $N_t=n$ for all $t \in [s,s+\tau)$,

.. set $s=s+\tau$,

.. set $(y^*,n^*)=$(value$(s)$,counter$(s)$), with $(y^*,n^*)=(\triangle,\infty)$ if it never stops,

.. simulate $z\in H$ with law $\kappa^{-1}\nu_{y,y^*}$,

.. set $y=h(y,y^*,z)$,

.. set $n=n+n^*+1$,

end while.

If $s<\infty$, set $Y_{t}=\Delta$ and $N_t=\infty$ for all $t\geq s$.
}

\vip

Observe that in the last line, we may have $s<\infty$ either because after a finite
number of steps, the simulation of $(y^*,n^*)$ with law $G_s$ has produced $(\triangle,\infty)$,
or because we did repeat the loop 
{\it ad infinitum}, but the increasing process $N$ became infinite in finite time.

\vip

At the end, this produces a process $(Y_t,N_t)_{t\geq 0}$ and one easily gets convinced that
for each $t\geq 0$, $(Y_t,N_t)$ is $G_t$-distributed. Indeed, if one extracts from the above
procedure only what is required to produce $(Y_t,N_t)$ (for some fixed $t$), one precisely re-obtains 
Algorithm \ref{algo} if $(Y_t,N_t)\neq (\triangle,\infty)$ 
(and in this case Algorithm \ref{algo} stops),
while $(Y_t,N_t)= (\triangle,\infty)$ implies that Algorithm \ref{algo} never stops.

\vip

By construction, the process $(Y_t,N_t)_{t\geq 0}$ 
is a time-inhomogeneous (possibly exploding) Markov process with values in 
$(E\times \nn)\cup\{(\triangle,\infty)\}$
with generator $\cL_t$, absorbed at $(\triangle,\infty)$ if this point is reached
and set to $(\triangle,\infty)$ after explosion if it explodes, where 
$$
\cL_t \Psi(y,n)= \kappa \Lambda(y) \int_H \int_{(E\times \nn)\cup\{(\triangle,\infty)\}} 
\Big[ \Psi(h(y,y^*,z),n+n^*+1) - \Psi(y,n)\Big] G_t(\dd y^*,\dd n^*) \frac{\nu_{y,y^*}(\dd z)}{\kappa}
$$
for all $t\geq 0$, all $\Psi \in B_b((E\times \nn)\cup \{(\triangle,\infty)\})$, 
all $y\in E$ and all $n \in \nn$.

\vip

{\it Step 2.} Here we handle a preliminary computation: for all $y,y^*\in E$, we have
\begin{equation}\label{xxx}
A(y,y^*)=\int_H \Big[\Lambda(h(y,y^*,z^*))-\Lambda(y)\Big] \nu_{y,y^*}(\dd z)\leq \kappa(1+e_0).
\end{equation}
Writing $y=(m,v)$, $y^*=(m^*,v^*)$ and recalling Subsection \ref{tlf}, $A(y,y^*)$ equals
$$
(1+e_0)\int_H \Big[|\vs(v,v^*,z)|^\gamma-|v|^\gamma \Big]\nu_{y,y^*}(\dd z)= 
(1+e_0)q(v,v^*)\intS [|v'(v,v^*,\sigma)|^\gamma-|v|^\gamma]
\beta_{v,v^*}(\sigma)\dd \sigma.
$$
But $|v'(v,v^*,\sigma)|\leq |v|+|v^*|$, see \eqref{vprime}, so that
$|v'(v,v^*,\sigma)|^\gamma -|v|^\gamma \leq |v^*|^\gamma$, whence
$$
A(y,y^*)\leq \kappa (1+e_0)q(v,v^*)|v^*|^\gamma=\kappa(1+e_0)
\frac{|v-v^*|^\gamma|v^*|^\gamma}{(1+|v|^\gamma)(1+|v^*|^2)}\leq \kappa(1+e_0),
$$
because $|v-v^*|^\gamma |v^*|^\gamma \leq (|v|^\gamma+|v^*|^\gamma) |v^*|^\gamma \leq 
|v|^\gamma(1+|v^*|^2)+(1+|v^*|^2)=(1+|v|^\gamma)(1+|v^*|^2)$.

\vip

{\it Step 3.} We now prove that $(Y_t,N_t)_{t\geq 0}$ actually does not explode nor reach the cemetery point,
that  $\E[N_t] \leq \exp(\kappa \intot \E[\Lambda(Y_s)] \dd s) -1$
and that $\E[\Lambda(Y_t)]\leq \E[\Lambda(Y_0)] \exp(\kappa (1+e_0)t)$.

\vip

For $A \in \nn_*$, we introduce $\zeta_A=\inf\{t\geq 0 : N_t\geq A\}$.
The process  $(Y_t,N_t)_{t\geq 0}$ does not explode nor reach the cemetery point during $[0,\zeta_A)$,
so that we can write, with $\Psi(y,n)=n\land A$, (recall that $N_0=0$ and that $t\mapsto N_t$ is a.s.
non-decreasing),
$$
\E[N_t \land A] = \E[N_{t \land \zeta_A}\land A]=\E\Big[\int_0^{t \land \zeta_A} \cL_s\Psi(Y_s,N_s)\dd s\Big]
=\intot \E\Big[\indiq_{\{N_s < A\}}\cL_s\Psi(Y_s,N_s)\Big] \dd s.
$$
Since $0\leq (n+n^*+1)\land A - n \land A \leq  1+n^*\land A$, we deduce that
$$
0\leq \cL_s \Psi(y,n) \leq \kappa \Lambda(y) \int_{(E\times \nn)\cup\{(\triangle,\infty)\}} [1+n^*\land A] 
G_s(\dd y^*,\dd n^*)= \kappa \Lambda(y) (1+\E[N_s\land A]),
$$ 
because $(Y_s,N_s)$ is 
$G_s$-distributed. We thus find
$$
\E[N_t\land A]\leq \kappa \int_0^t\E[\Lambda(Y_s)\indiq_{\{N_s<A\}}] \E[1+N_s\land A]\dd s,
$$
whence, by the Gronwall lemma,
\begin{equation}\label{jab1}
\E[N_t\land A] \leq \exp\Big(\kappa \intot \E[\Lambda(Y_s)\indiq_{\{N_s<A\}}] \dd s\Big) -1 .
\end{equation}

We next choose $\Psi(y,n)=\Lambda(y)\indiq_{\{n<A\}}$ and write, as previously,
$$
\E[\Lambda(Y_t)\indiq_{\{N_t <A\}}] = \E[\Lambda(Y_{t \land \zeta_A})\indiq_{\{N_{t\land \zeta_A} <A\}}]
= \E[\Lambda(Y_0)]+\E\Big[\int_0^{t \land \zeta_A} \cL_s\Psi(Y_s,N_s)\dd s\Big],
$$
whence
$$
\E[\Lambda(Y_t)\indiq_{\{N_t <A\}}] =\E[\Lambda(Y_0)]+\intot \E\Big[\indiq_{\{N_s <A\}}\cL_s\Psi(Y_s,N_s)\Big] \dd s.
$$
But $\cL_s\Psi(y,n)$ equals
\begin{align*}
& \Lambda(y) \int_H \int_{(E\times \nn)\cup\{(\triangle,\infty)\}} \!\!
\Big[\Lambda(h(y,y^*,z^*))\indiq_{\{n+n^*+1<A\}}- \Lambda(y)\indiq_{\{n<A\}}\Big] G_s(\dd y^*,\dd n^*)\nu_{y,y^*}(\dd z)\\
\leq &  \Lambda(y) \indiq_{\{n<A\}} \int_H \int_{E\times \nn}
\Big[\Lambda(h(y,y^*,z^*))-\Lambda(y)\Big] G_s(\dd y^*,\dd n^*)\nu_{y,y^*}(\dd z),
\end{align*}
whence $\cL_s\Psi(y,n) \leq  \kappa(1+e_0) \indiq_{\{n<A\}}\Lambda(y)$ by \eqref{xxx}
and since $G_s(E\times \nn)\leq 1$.
Finally, we have checked that
$\E[\Lambda(Y_t)\indiq_{\{N_t <A\}}] \leq \E[\Lambda(Y_0)]
+\kappa(1+e_0) \intot \E[\indiq_{\{N_s <A\}}\Lambda(Y_s)] \dd s,$
whence
\begin{equation}\label{jab2}
\E[\Lambda(Y_t)\indiq_{\{N_t <A\}}]\leq \E[\Lambda(Y_0)] \exp(\kappa(1+e_0) t).
\end{equation}

Gathering \eqref{jab1} and \eqref{jab2} and letting $A$ increase to infinity, we first conclude
that $\E[N_t]<\infty$ for all $t\geq 0$. In particular, $N_t<\infty$ a.s. for all $t\geq 0$, and 
the process $(Y_t,N_t)_{t\geq 0}$ does a.s. not explode and never reach $(\triangle,\infty)$.
Consequently, $\E[\Lambda(Y_t)]=\lim_{A\to \infty} \E[\Lambda(Y_t)\indiq_{\{N_t <A\}}]
\leq \E[\Lambda(Y_0)] \exp(\kappa (1+e_0)t)$ by \eqref{jab2}.
Finally, we easily conclude from \eqref{jab1} that $\E[N_t] \leq \exp(\kappa \intot \E[\Lambda(Y_s)] \dd s) -1$.

\vip

{\it Step 4.} By Step 3, we know that $G_t$ (which is the law of $(Y_t,N_t)$)
is actually supported by $E\times\nn$ for all $t\geq 0$. Hence Algorithm \ref{algo} a.s. stops.
The process $(Y_t,N_t)_{t\geq 0}$ is thus an inhomogeneous Markov with generator $\ctL_t$ defined, 
for $\Psi \in C_b(E\times\nn)$, by
$$\ctL_t \Psi(y,n)= \Lambda(y) \int_H \int_{E\times \nn}
\Big[ \Psi(h(y,y^*,z),n+n^*+1) - \Psi(y,n)\Big] G_t(\dd y^*,\dd n^*) \nu_{y,y^*}(\dd z)
$$
and we thus have
$$
\int_{E\times\nn}\Psi(y,n)G_t(\dd y,\dd n)= \int_{E\times\nn}\Psi(y,n)G_0(\dd y,\dd n)+\intot \int_{E\times \nn} 
\ctL_s \Psi(y,n) G_s(\dd y,\dd n) \dd s.
$$
Let now $F_t \in \cP(E)$ be the law of $Y_t$ (so $F_t$ is the first marginal of $G_t$).
It starts from $F_0$ and solves (A). Indeed, $\int_E |v|^\gamma F_t(\dd m,\dd v)\leq 
\int_E \Lambda(y)F_t(\dd y)=\E[\Lambda(Y_t)]$ is locally bounded by Step 3 
and for all $\Phi \in C_b(E)$,
applying the above equation with $\Psi(y,n)=\Phi(y)$, we find 
$\ctL_t \Psi(y,n)=  \Lambda(y) \int_H \int_{E} [ \Phi(h(y,y^*,z)) - \Phi(y)] F_t(\dd y^*) \nu_{y,y^*}(\dd z)$,
so that 
$$
\int_{E}\Phi(y)F_t(\dd y)\!=\!\! \int_{E}\Phi(y)F_0(\dd y)+\!\intot \int_{E}\int_E\int_H 
\Lambda(y)\Big[ \Phi(h(y,y^*,z)) - \Phi(y)\Big] \nu_{y,y^*}(\dd z)F_s(\dd y^*)F_s(\dd y) \dd s
$$
as desired. Finally, we have already seen in Step 3 that
$\E[N_t]\leq \exp(\kappa \intot \E[\Lambda(Y_s)] \dd s) -1=\exp(\kappa \intot \int_E \Lambda(y)F_s(\dd y) 
\dd s) -1$.
We have proved Proposition \ref{mr}, as well as the existence part of Proposition \ref{beeu}.
\hfill $\square$

\section{Series expansion}\label{sps}

The goal of this section is to prove Proposition \ref{psum}. 
We thus consider $F_0 \in \cP(E)$ such that $\int_E \Lambda(y)F_0(\dd y)
=(1+e_0)\int_E(1+|v|^\gamma)F_0(\dd m,\dd v)<\infty$. To shorten notation, we set 
$J_\Upsilon=J_\Upsilon(F_0)$. 

\vip

{\it Step 1.} Here we check that for all $\Upsilon \in \cT$, all $t\geq 0$, 
$C_\Upsilon(t)=\intot \int_E \Lambda(y)
J_\Upsilon(\dd s,\dd y)<\infty$. We work by induction. First, since 
$J_\circ(\dd t,\dd y)=\delta_0(\dd t)F_0(\dd y)$,
we find that $C_\circ(t)=\int_E \Lambda(y) F_0(\dd y)$ for all $t\geq 0$, which is finite by assumption.
Next, we fix $t\geq 0$, 
$\Upsilon \in \cT\setminus\{\circ\}$, we consider $\Upsilon_\ell$ and $\Upsilon_r$ as in the statement,
we assume by induction that $C_{\Upsilon_\ell}(t)<\infty$ and $C_{\Upsilon_r}(t)<\infty$ and prove that
$C_\Upsilon(t)<\infty$. We start from
\begin{align*}
C_\Upsilon(t)=&\intot \int_E \Lambda(x)Q(\Gamma_s(J_{\Upsilon_\ell}),\Gamma_s(J_{\Upsilon_r}))(\dd x) \dd s\\
=&\intot \int_E\int_E\int_H \Lambda(h(y,y^*,z)) \Lambda(y)\nu_{y,y^*}(\dd z) \Gamma_s(J_{\Upsilon_r})(\dd y^*)  
\Gamma_s(J_{\Upsilon_\ell})(\dd y)\dd s\\
=& \intot \int_E\int_E\int_H \int_0^s \int_0^s \Lambda(h(y,y^*,z))\Lambda(y) \nu_{y,y^*}(\dd z) \\
& \hskip5cm J_{\Upsilon_r}(\dd u^*,\dd y^*)e^{-\kappa\Lambda(y^*)(s-u^*)}J_{\Upsilon_\ell}(\dd u,\dd y) 
e^{-\kappa \Lambda(y)(s-u)}
\dd s.
\end{align*}
But it follows from \eqref{xxx} that $\int_H\Lambda(h(y,y^*,z))\nu_{y,y^*}(\dd z) \leq \kappa (1+e_0+\Lambda(y))\leq 
2\kappa \Lambda(y)$,
whence
\begin{align*}
C_\Upsilon(t)\leq & 2\kappa  \intot \int_E\int_E\int_0^s \int_0^s \Lambda^2(y) 
J_{\Upsilon_r}(\dd u^*,\dd y^*) J_{\Upsilon_\ell}(\dd u,\dd y) e^{-\kappa \Lambda(y)(s-u)}\dd s\\
\leq & 2\kappa C_{\Upsilon_r}(t) \intot \int_E\int_0^s \Lambda^2(y) 
J_{\Upsilon_\ell}(\dd u,\dd y) e^{-\kappa \Lambda(y)(s-u)}\dd s.
\end{align*}
We finally used that for $s\in [0,t]$, $\int_E\int_0^sJ_{\Upsilon_r}(\dd u^*,\dd y^*) \leq 
\int_E\int_0^t \Lambda(y^*)J_{\Upsilon_r}(\dd u^*,\dd y^*) =C_{\Upsilon_r}(t)$.
Next, by the Fubini theorem,
\begin{align*}
C_\Upsilon(t)\leq & 2\kappa  C_{\Upsilon_r}(t) \intot \int_E J_{\Upsilon_\ell}(\dd u,\dd y) \int_u^t \Lambda^2(y)
e^{-\kappa \Lambda(y)(s-u)}\dd s\leq 2  C_{\Upsilon_r}(t) \intot \int_E  \Lambda(y) J_{\Upsilon_\ell}(\dd u,\dd y),
\end{align*}
so that $C_\Upsilon(t)\leq 2 C_{\Upsilon_r}(t)C_{\Upsilon_\ell}(t) <\infty$ as desired.

\vip

{\it Step 2.} We deduce from Step 1 that for all $\Upsilon \in \cT$,
$$
t\mapsto \int_E \Lambda(y)\Gamma_t(J_\Upsilon)(\dd y)=\intot \int_E \Lambda(y)e^{-\Lambda(y)(t-s)}
J_\Upsilon(\dd s,\dd y)\leq C_\Upsilon(t)
$$
is locally bounded.

\vip

{\it Step 3.} We fix $k\in \nn_*$ and denote by $\cT_k\subset \cT$ the finite set of all ordered binary 
trees with at most
$k$ nodes. We introduce $F_t^k=\sum_{\Upsilon \in \cT_k} \Gamma_t(J_\Upsilon)$. By Step 2, we know that
$t\mapsto \int_E \Lambda(y) F_t^k(\dd y)$ is locally bounded. We claim that
for all $\Phi \in C_b(E)$, all $t\geq 0$,
\begin{align}\label{todiff}
\int_E \Phi(y) F_t^k(\dd y) =& \int_E \Phi(y)e^{-\kappa \Lambda(y)t}F_0(\dd y) \\
&+\sum_{\Upsilon \in \cT_k\setminus\{\circ\}} \intot \int_E \Phi(y)e^{-\kappa \Lambda(y)(t-s)} 
Q(\Gamma_s(J_{\Upsilon_\ell}),
\Gamma_s(J_{\Upsilon_r}))(\dd y)\dd s,\notag
\end{align}
whence in particular $F_0^k=F_0$.
Indeed, we first observe that
$$
\int_E \Phi(y) \Gamma_t(J_\circ)(\dd y) = \intot \int_E \Phi(y)e^{-\kappa \Lambda(y)(t-s)}J_\circ(\dd s,\dd y)
=\int_E \Phi(y)e^{-\kappa \Lambda(y)t}F_0(\dd y)
$$ 
and then that,
for $\Upsilon \in \cT_k\setminus\{\circ\}$,
\begin{align*}
\int_E \Phi(y)\Gamma_t(J_\Upsilon)(\dd y)=& \intot \int_E \Phi(y)e^{-\kappa \Lambda(y)(t-s)} J_{\Upsilon}(\dd s,\dd y).
\end{align*}
Since $J_{\Upsilon}(\dd s,\dd y)=Q(\Gamma_s(J_{\Upsilon_\ell}),
\Gamma_s(J_{\Upsilon_r}))(\dd y)\dd s$ by definition, the result follows.

\vip

{\it Step 4.} Differentiating \eqref{todiff}, we find that for all $\Phi\in C_b(E)$, all $t\geq 0$,
\begin{equation}\label{diff}
\frac d{dt}\int_E \Phi(y) F_t^k(\dd y) = - \kappa \int_E \Lambda(y) \Phi(y) F_t^k(\dd y) + 
\sum_{\Upsilon \in \cT_k\setminus\{\circ\}} \int_E \Phi(y)Q(\Gamma_t(J_{\Upsilon_\ell}),\Gamma_t(J_{\Upsilon_r}))(\dd y).  
\end{equation}
The differentiation is easily justified, using that $\Phi$ is bounded, that 
$t\mapsto \int_E \Lambda(y) F_t^k(\dd y)$ is locally bounded, as well as 
$t\mapsto  \int_E \Lambda(y) \Gamma_t(J_\Upsilon)(\dd y)$ for all $\Upsilon \in \cT$,
and that for $F,G\in\cM(E)$,
$$
Q(F,G)(E) = \kappa G(E) \int_E \Lambda(y)F(\dd y).
$$

{\it Step 5.} Here we verify that $\sup_{[0,\infty)} F_t^k(E) \leq 1$ and that 
$\sup_{k\geq 1} \sup_{[0,T]} \int_E \Lambda(y) F_t^k(\dd y) <\infty$ for all $T>0$. 
First observe that if $\Phi\in C_b(E)$ is nonnegative, then
\begin{align*}
\sum_{\Upsilon \in \cT_k\setminus\{\circ\}} \int_E \Phi(y)Q(\Gamma_t(J_{\Upsilon_\ell}),\Gamma_t(J_{\Upsilon_r}))(\dd y)
\leq& \sum_{\Upsilon_1 \in \cT_k, \Upsilon_2 \in \cT_k} 
\int_E \Phi(y)Q(\Gamma_t(J_{\Upsilon_1}),\Gamma_t(J_{\Upsilon_2}))(\dd y)\\
=&\int_E \Phi(y)Q(F_t^k,F_t^k)(\dd y).
\end{align*}
We used that the map $\Upsilon \mapsto (\Upsilon_\ell,\Upsilon_r)$ 
is injective from $\cT_k\setminus\{\circ\}$ into $\cT_k\times\cT_k$, as well as 
the bilinearity of $Q$. Consequently, by \eqref{diff},
\begin{align}\label{yyy}
\frac d{dt}\int_E \Phi(y) F_t^k(\dd y) \leq& - \kappa \int_E \Lambda(y) \Phi(y) F_t^k(\dd y) + 
\int_E \Phi(y)Q(F_t^k,F_t^k)(\dd y)\\
=& \int_E \int_E \cB\Phi(y,y^*)  F_t^k(\dd y^*)  F_t^k(\dd y) +  \kappa (F_t^k(E)-1)
\int_E \Lambda(y) \Phi(y) F_t^k(\dd y).\notag
\end{align}
For the last equality, we used that for any $F,G \in \cM(E)$, we have
\begin{equation}\label{zzz}
\int_E\int_E\cB \Phi(y,y^*) G(\dd y^*)F(\dd y)= \int_E \Phi(y)Q(F,G)(\dd y)-\kappa G(E)
\int_E \Lambda(y) \Phi(y) F(\dd y).
\end{equation}

Applying \eqref{yyy} with $\Phi=1$, we see that 
$$
\frac d {dt} F_t^k(E) \leq 
\kappa (F_t^k(E)-1)\int_E \Lambda(y) F_t^k(\dd y).
$$
Since $F_0^k(E)=F_0(E)=1$ and since $t\mapsto \int_E \Lambda(y)F_t^k(\dd y)$ is locally bounded,
we conclude that $F_t^k(E)\leq 1$ for all $t\geq 0$ (because 
$(d/dt) [(F_t^k(E)-1)\exp(- \kappa \intot \int_E \Lambda(y) F_s^k(\dd y)\dd s)] \leq 0$).

\vip

Applying next \eqref{yyy} with $\Phi=\Lambda$ and using that $F_t^k(E)\leq 1$, we find that
\begin{align*}
\frac d {dt} \int_E \Lambda(y) F_t^k(\dd y) 
\leq&  \int_E \int_E \cB\Lambda (y,y^*)  F_t^k(\dd y^*)  F_t^k(\dd y)
\leq \kappa(1+e_0) F_t^k(E)\int_E \Lambda(y) F_t^k(\dd y),
\end{align*}
because $\cB \Lambda(y,y^*)=\Lambda(y)\int_H[\Phi(h(y,y^*,z))-\Phi(y)]\nu_{y,y^*}(\dd z)\leq \kappa(1+e_0) 
\Lambda(y)$, see 
\eqref{xxx}. Since $F_0^k=F_0$ and since, again,  $F_t^k(E)\leq 1$, we conclude that
$\int_E \Lambda(y) F_t^k(\dd y) \leq [\int_E \Lambda(y) F_0(\dd y)]\exp(\kappa (1+e_0)t)$.

\vip

{\it Step 6.} By Step 5, the series of nonnegative measures 
$F_t=\sum_{\Upsilon \in \cT} \Gamma_t(J_\Upsilon)$ converges, satisfies 
$F_t(E)\leq 1$, and we know that $t\mapsto \int_E \Lambda(y)F_t(\dd y)$ is locally bounded.
Passing to the limit in the time-integrated version of \eqref{diff}, we find that
for all $\Phi \in C_b(E)$, all $t\geq 0$,
\begin{align}\label{ttac}
\int_E \Phi(y) F_t(\dd y) =&\int_E \Phi(y) F_0(\dd y) - \kappa \intot \int_E \Lambda(y) 
\Phi(y) F_s(\dd y)\dd s \\
& + \sum_{\Upsilon \in \cT\setminus\{\circ\}} \intot\int_E \Phi(y)
Q(\Gamma_s(J_{\Upsilon_\ell}),\Gamma_s(J_{\Upsilon_r}))(\dd y)\dd s. \notag
\end{align}
To justify the limiting procedure, it suffices to use that $t\mapsto \int_E \Lambda(y)F_t(\dd y)$ 
is locally bounded, as well as $t\mapsto \sum_{\Upsilon \in \cT\setminus\{\circ\}}
Q(\Gamma_t(J_{\Upsilon_\ell}),\Gamma_t(J_{\Upsilon_r}))(E)=
\sum_{\Upsilon_1 \in \cT, \Upsilon_2 \in \cT} Q(\Gamma_t(J_{\Upsilon_1}),\Gamma_t(J_{\Upsilon_2}))(E)$, which equals 
$Q(\sum_{\Upsilon_1 \in \cT}\Gamma_t(J_{\Upsilon_1}),\sum_{\Upsilon_2 \in \cT}\Gamma_t(J_{\Upsilon_2}))(E)=Q(F_t,F_t)(E)
=\kappa F_t(E) \int_E \Lambda(y)F_t(\dd y)$. We used that 
the map $\Upsilon \mapsto (\Upsilon_\ell,\Upsilon_r)$ 
is bijective from $\cT\setminus\{\circ\}$ into $\cT\times\cT$, as well as 
the bilinearity of $Q$.
By the same way, \eqref{ttac} rewrites as
\begin{align*}
\int_E \Phi(y) F_t(\dd y) =&\int_E \Phi(y) F_0(\dd y) - \kappa \intot \int_E \Lambda(y) \Phi(y) F_s(\dd y)\dd s
 + \intot\int_E \Phi(y) Q(F_s,F_s)(\dd y)\dd s\\
=&\int_E \Phi(y) F_0(\dd y) + \intot \int_E \int_E \cB\Phi(y,y^*) F_s(\dd y^*)F_s(\dd y)\dd s\\
 &+\kappa \intot (F_s(E)-1) \Big(\int_E \Lambda(y) \Phi(y) F_s(\dd y) \Big)\dd s,
\end{align*}
see \eqref{zzz}.
To conclude that $(F_t)_{t\geq 0}$ solves (A), it only remains to verify that 
$t\mapsto \int_E |v|^\gamma F_t(\dd m,\dd v)$ is locally bounded, which follows from the fact that
$\int_E |v|^\gamma F_t(\dd m,\dd v)\leq \int_E \Lambda(y) F_t(\dd y)$, and 
that $F_t(E)=1$ for all $t\geq 0$. 
Applying the previous equation with $\Phi=1$ (for which $\cB\Phi=0$), 
we find that $F_t(E)=1+ \intot (F_s(E)-1) \alpha_s \dd s$,
where $\alpha_s= \kappa \int_E \Lambda(y) F_s(\dd y)$ is locally bounded. Hence $F_t(E)=1$ for all $t\geq 0$
by the Gronwall lemma.
The proof of Proposition \ref{psum} is complete. \hfill $\square$

\section{Well-posedness of (A)}\label{swp}

We have already checked (twice) the existence part of Proposition \ref{beeu}. We now turn to uniqueness.
Let us consider two solutions $F$ and $G$ to (A) with $F_0=G_0$. By assumption,
we know that
$\alpha_t=\int_E \Lambda(y) (F_t+G_t)(\dd y)=(1+e_0)\int_E (1+|v|^\gamma)(F_t+G_t)(\dd m,\dd v)$ 
is locally bounded. Hence, setting 
$\e^M_t=\int_E \Lambda(y)\indiq_{\{\Lambda(y)\geq M\}} (F_t+G_t)(\dd y)$, we have
$\lim_{M\to \infty} \intot \e^M_s \dd s=0$ for all $t\geq 0$.

\vip

We use the total variation distance $u_t=||F_t-G_t||_{TV}=\sup \{u_t^\Phi : \Phi \in C_b(E),
||\Phi||_\infty \leq 1\}$
where $u_t^\Phi=\int_E \Phi(y)(F_t-G_t)(\dd y)$. We also have $u_t=\int_E |F_t-G_t|(\dd y)$, where 
for $\mu$ a 
finite signed measure on $E$, $|\mu|=\mu_++\mu_-$ with the usual definitions of $\mu_+$ and $\mu_-$.

\vip

We fix $\Phi \in C_b(E)$ such that $||\Phi||_\infty \leq 1$ and we use Definition \ref{GE} to write 
$$
\frac d{dt}u_t^\Phi= \int_E\int_E \cB\Phi(y,y^*) (F_t(\dd y^*)F_t(\dd y)-G_t(\dd y^*)G_t(\dd y))
=A_t^\Phi+B_t^\Phi,
$$
where $A_t^\Phi=\int_E\int_E \cB\Phi(y,y^*) (F_t-G_t)(\dd y^*)F_t(\dd y)$ 
and $B_t^\Phi=\int_E\int_E \cB\Phi(y,y^*) G_t(\dd y^*)(F_t-G_t)(\dd y)$.

\vip

Using only that $|\cB\Phi(y,y^*)|\leq 2\kappa ||\Phi||_\infty \Lambda(y) \leq 2\kappa \Lambda(y)$, we get
$$
A_t^\Phi \leq 2\kappa \int_E\Lambda(y) F_t(\dd y)\int_E |F_t-G_t|(\dd y^*)\leq
2\kappa \alpha_t ||F_t-G_t||_{TV}=2\kappa \alpha_t u_t.
$$

We next recall that $||\Phi||_\infty\leq 1$ and that $\int_E G_t(\dd y^*)=1$ and we write
\begin{align*}
B^\Phi_t = & \int_E\int_E\int_H \Lambda(y)\Phi(h(y,y^*,z)) \nu_{y,y^*}(dz)  G_t(\dd y^*)(F_t-G_t)(\dd y)
- \kappa  \int_E \Lambda(y)\Phi(y)(F_t-G_t)(\dd y)\\
\leq & \kappa \int_E \Lambda(y)|F_t-G_t|(\dd y)-\kappa  \int_E \Lambda(y)\Phi(y)(F_t-G_t)(\dd y).
\end{align*}
Since now $|F_t-G_t|(\dd y)-\Phi(y)(F_t-G_t)(\dd y)$ is a nonnegative measure bounded by
$2(F_t+G_t)(\dd y)$, we may write, for any
$M\geq 1$,
\begin{align*}
B^\Phi_t \leq &  \kappa M \int_E [|F_t-G_t|(\dd y)-\Phi(y)(F_t-G_t)(\dd y)] + 2\kappa 
\int_E \Lambda(y)\indiq_{\{\Lambda(y)\geq M\}} (F_t+G_t)(\dd y)\\
=& \kappa M u_t - \kappa M u_t^\Phi + 2\kappa \e^M_t.
\end{align*}

All this proves that $(d/dt)u_t^\Phi \leq 2\kappa \alpha_t u_t+\kappa M u_t - \kappa M u_t^\Phi + 2\kappa \e^M_t$,
whence
$$
\frac d {dt} (u_t^\Phi e^{\kappa M t}) \leq  [2\kappa \alpha_t u_t+\kappa M u_t + 2\kappa \e^M_t]e^{\kappa M t}.
$$
Integrating in time (recall that $u_0^\Phi=0$) and taking the supremum over $\Phi \in C_b(E)$ such that 
$||\Phi||_\infty\leq 1$, we find
$u_te^{\kappa M t} \leq \intot [(2\kappa \alpha_s + \kappa M)u_s +2\kappa \e^M_s]e^{\kappa M s} \dd s.$

\vip

Recall the following generalized Gronwall lemma: if we have three locally bounded nonnegative functions
$v,g,h$ such that $v_t \leq \intot (h_s v_s+g_s)\dd s$ for all $t\geq 0$, then
$v_t \leq \intot g_s \exp(\int_s^t h_u \dd u) \dd s$. Applying this result with 
$v_t=u_te^{\kappa M t}$, $g_t=2\kappa \e^M_te^{\kappa M t}$ and $h_t=2\kappa \alpha_t + \kappa M$, we get
$$
u_t e^{\kappa M t} \leq  2\kappa  \intot  \e^M_s \exp\Big(
\kappa M s+2 \kappa \int_s^t \alpha_u \dd u + \kappa M(t-s) \Big) \dd s,
$$
so that $u_t \leq  2\kappa \intot  \e^M_s \exp(2 \kappa \int_s^t \alpha_u\dd u) \dd s$.
Recalling that $\alpha$ is locally bounded and that $\intot \e^M_s \dd s$ tends to $0$ as $M\to \infty$,
we conclude that $u_t=0$, which was our goal. The proof of Proposition \ref{beeu} is now complete.
\hfill $\square$

\vip

We end this section with the

\begin{proof}[Proof of Remark \ref{dec}]
We fix $A\geq 1$ and apply \eqref{gew} with $\Phi_A(m,v)=|v|^2\land A$, which belongs to $C_b(E)$.
With the notation $y=(m,v)$ and $y^*=(m^*,v^*)$, we find
\begin{align*}
\cB\Phi_A(y,y^*)=&\Lambda(v) \int_H[|\vs(v,v^*,z)|^2\land A - |v|^2\land A]\nu_{y,y^*}(\dd z)\\
=&\Lambda(v)q(v,v^*)
\intS [|v'(v,v^*,\sigma)|^2\land A- |v|^2\land A] \beta_{v,v^*}(\sigma)\dd\sigma\\
=&\kappa(1+e_0)\frac{|v-v^*|^\gamma}{1+|v^*|^2} 
\Big[\kappa^{-1}\intS (|v'(v,v^*,\sigma)|^2\land A ) \beta_{v,v^*}(\sigma)\dd \sigma - |v|^2\land A\Big]\\
\leq & \kappa(1+e_0)\frac{|v-v^*|^\gamma}{1+|v^*|^2} 
\Big[ \Big(\kappa^{-1}\intS|v'(v,v^*,\sigma)|^2\beta_{v,v^*}(\sigma)\dd \sigma\Big)
\land A - 
|v|^2\land A\Big].
\end{align*}
But a simple computation, recalling \eqref{vprime}
and using that 
$$
\frac{|v-v^*|}\kappa \intS \sigma \beta_{v,v^*}(\sigma)\dd \sigma = 
\frac{|v-v^*|}\kappa \intS \sigma b({\textstyle{\langle\frac{v-v^*}{|v-v^*|},\sigma\rangle}})\dd \sigma
= c (v-v^*)
$$
where $c= 2\pi\kappa^{-1}\int_0^\pi \sin \theta \cos\theta b(\cos\theta)\dd \theta \in [-1,1]$ 
(recall \eqref{beta}) shows that 
\begin{equation}\label{ccc}
\kappa^{-1}\intS |v'(v,v^*,\sigma)|^2\beta_{v,v^*}(\sigma)\dd \sigma=
\frac{1+c}2|v|^2+\frac{1-c}2|v^*|^2=(1-\alpha)|v|^2+\alpha|v^*|^2, 
\end{equation}
where $\alpha=(1-c)/2 \in [0,1]$. Hence, 
$$
\cB\Phi_A(y,y^*)+\cB\Phi_A(y^*,y) \leq \kappa(1+e_0)|v-v^*|^\gamma G_A(|v|^2,|v^*|^2),
$$
where $G_A(x,x^*)= (1+x^*)^{-1}[((1-\alpha)x+\alpha x^*)\land A - x\land A] + 
(1+x)^{-1}[((1-\alpha)x^*+\alpha x)\land A - x^*\land A]$.
One can check that $G_A(x,x^*)\leq 0$ if $x\lor x^*\leq A$ and it always holds true that
$G_A(x,x^*)\leq (1+x^*)^{-1}\alpha x^*+ (1+x)^{-1}\alpha x \leq 2$. At the end, 
$G_A(x,x^*) \leq 2(\indiq_{\{x>A\}}+\indiq_{\{x^*>A\}})$. Consequently, applying \eqref{gew} 
and using a symmetry argument,
\begin{align*}
\int_E (|v|^2&\land A)F_t(\dd y)=\!\int_E (|v|^2\land A)F_0(\dd y)
+ \intot \int_E\int_E \cB\Phi_A(y,y^*) F_s(\dd y^*)F_s(\dd y) \dd s\\
=&\!\int_E (|v|^2\land A)F_0(\dd y)
+ \frac12 \intot \int_E\int_E [\cB\Phi_A(y,y^*)+\cB\Phi_A(y^*,y)] F_s(\dd y^*)F_s(\dd y) \dd s\\
\leq & \int_E |v|^2F_0(\dd y)+ \kappa(1+e_0)
 \intot \int_E\int_E |v-v^*|^\gamma[\indiq_{\{|v|^2>A\}}+\indiq_{\{|v^*|^2>A\}}] F_s(\dd y^*)F_s(\dd y) \dd s.
\end{align*}
Letting $A\to \infty$ and using that $\intot  \int_E\int_E |v-v^*|^\gamma  F_s(\dd y^*)F_s(\dd y) \dd s<\infty$
(which follows from the fact that $\sup_{[0,t]} \int |v|^\gamma F_s(\dd y)<\infty$), we conclude that
$\int_E |v|^2 F_t(\dd m,\dd v)\leq \int_E |v|^2 F_0(\dd m,\dd v)$.
\end{proof}

\section{Relation between (A) and the Boltzmann equation}\label{sr}

It  remains to prove Proposition \ref{relat}. In the whole section, we consider
the solution $F$ to (A) starting from some $F_0 \in \cP(E)$
such that $\int_E [|v|^\gamma+m(1+|v|^{2+2\gamma})] F_0(\dd y)<\infty$.
We define the nonnegative measure $f_t$ on $\rr^3$ 
by $f_t(A)=\int_E m\indiq_{\{v \in A\}} F_t(\dd y)$ for all $A\in\cB(\rr^3)$ 
and we assume that $f_0 \in \cP(\rr^3)$ and
that $\int_{\rr^3}|v|^2 f_0(\dd v)=e_0$, where $e_0$ was used in Subsection \ref{tlf} to build the
coefficients of (A). We want to prove
that $f=(f_t)_{t\geq 0}$ is a weak solution to \eqref{be}.

\vip

The main difficulty is to establish properly the following estimate, of which the proof is postponed
at the end of the section.

\begin{lem}\label{ttc}
For any $T>0$, $\sup_{[0,T]}\int_E m (1+|v|^{2+\gamma})F_t(\dd m,\dd v)<\infty$. 
\end{lem}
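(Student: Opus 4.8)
The plan is to run a moment estimate on (A) with a test function of the form $\Phi_A(m,v)=m\,\psi_A(v)$ for a suitably truncated version $\psi_A$ of $1+|v|^{2+\gamma}$. However, since $\Phi_A$ must lie in $C_b(E)$ and $m$ is unbounded, I would first truncate in $m$ as well, say $\Phi_{A,B}(m,v)=(m\land B)\,\psi_A(v)$ with $\psi_A(v)=(1+|v|^{2+\gamma})\land A$, apply \eqref{gew}, compute $\cB\Phi_{A,B}$, bound it, and then let $B\to\infty$ and $A\to\infty$. Recall $h(y,y^*,z)=\bigl(\frac{mm^*(1+|v^*|^2)}{1+e_0},\vs(v,v^*,z)\bigr)$, so the weight component of the post-collisional state carries a factor $\frac{m^*(1+|v^*|^2)}{1+e_0}$; this is exactly the mechanism by which $\int_E m F_t(\dd y)$ stays under control (it is what makes Proposition \ref{relat} work), and it is what I would track carefully here.

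First I would write, with $y=(m,v)$, $y^*=(m^*,v^*)$, ignoring the $B$-truncation for the heuristic,
\begin{align*}
\cB\Phi_A(y,y^*)&=\Lambda(v)\int_H\Bigl[\frac{mm^*(1+|v^*|^2)}{1+e_0}\psi_A(\vs)-m\,\psi_A(v)\Bigr]\nu_{y,y^*}(\dd z)\\
&=\Lambda(v)q(v,v^*)\frac{mm^*(1+|v^*|^2)}{1+e_0}\intS[\psi_A(v')-\tilde\psi_A(v)]\beta_{v,v^*}(\sigma)\dd\sigma + (\text{a correction from fictitious jumps}),
\end{align*}
and I would then integrate against $F_s(\dd y^*)F_s(\dd y)$. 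The key point is that $\Lambda(v)q(v,v^*)\frac{(1+|v^*|^2)}{1+e_0}=\frac{|v-v^*|^\gamma}{1}$ up to the constant bookkeeping (indeed $\Lambda(v)q(v,v^*)=\frac{(1+e_0)|v-v^*|^\gamma}{(1+|v^*|^2)}$, so the product is exactly $(1+e_0)|v-v^*|^\gamma\cdot\frac{1}{1+e_0}=|v-v^*|^\gamma$ after the $(1+|v^*|^2)$ cancels). Hence the gain term is governed by $\int_E\int_E m m^*|v-v^*|^\gamma\,|v'|^{2+\gamma}F_s(\dd y^*)F_s(\dd y)$, and using $|v'|\le|v|+|v^*|$ together with $|v-v^*|^\gamma\le(|v|+|v^*|)^\gamma$ and Young's inequality I would bound this by a sum of terms of the form $\int m|v|^{p}F_s(\dd y)\cdot\int m^*|v^*|^{q}F_s(\dd y^*)$ with $p+q\le 2+2\gamma$ and $p\le 2+\gamma$. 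The terms with $p=2+\gamma$ are multiplied by $\int m^*|v^*|^\gamma F_s(\dd y^*)$, which is $\le\int m^*(1+|v^*|^2)F_s(\dd y^*)\le 1+e_0$ by Remark \ref{dec} applied to the $m$-weighted picture (equivalently, by Proposition \ref{relat}, $\int m|v|^2F_s=e_0$ for all $s$); all remaining products involve moments of order $\le 2+2\gamma$ in the $m$-weighted measure, which are finite at $s=0$ by hypothesis and which I would control by a companion Gronwall estimate (or simply note that $\int m(1+|v|^{2+2\gamma})F_0<\infty$ propagates — one needs a preliminary lemma, proved the same way, that $\sup_{[0,T]}\int m(1+|v|^{2+2\gamma})F_t<\infty$, or one bootstraps the two moments $2+\gamma$ and $2+2\gamma$ simultaneously). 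After taking $B\to\infty$ (monotone convergence in $m$, justified because all bounds are uniform in $B$) and $A\to\infty$ (monotone convergence in $v$), I obtain a closed differential inequality $\frac{d}{dt}\int_E m(1+|v|^{2+\gamma})F_t\le C\bigl(1+\int_E m(1+|v|^{2+\gamma})F_t\bigr)$, and Gronwall finishes it.

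The main obstacle I expect is bookkeeping the fictitious-jump correction and, more seriously, making the truncation-removal rigorous: when I replace $|v|^{2+\gamma}$ by $|v|^{2+\gamma}\land A$ the sign structure that made things work in Remark \ref{dec} (where $G_A\le 0$ off a tail set) is not available, because here the weight $\frac{m^*(1+|v^*|^2)}{1+e_0}$ in front of the gain term destroys the clean cancellation $\tfrac{1+c}{2}|v|^2+\tfrac{1-c}{2}|v^*|^2$; one genuinely needs the finiteness of the order-$(2+2\gamma)$ $m$-moment to absorb the gain, so the logical order matters — I would first establish (via the identical method, with $\psi_A=|v|^{2+2\gamma}\land A$, where now $|v'|^{2+2\gamma}\le(|v|+|v^*|)^{2+2\gamma}$ expands into moments of order $\le 2+2\gamma$ against a weight $|v-v^*|^\gamma$, giving order $\le 2+3\gamma$... ) — so in fact the cleanest route is a simultaneous induction/bootstrap on the pair of moments $(2+\gamma,\,2+2\gamma)$, or to assume directly enough initial moments. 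Given the paper only assumes $\int m(1+|v|^{2+2\gamma})F_0<\infty$, the honest statement is that one needs a short preliminary step propagating $\int_E m(1+|v|^{2+2\gamma})F_t$ locally in time (which itself may require tracking order $2+3\gamma$, but there $|v-v^*|^\gamma$ can be split off onto the $\Lambda$ factor so only $2+2\gamma$ total appears after using $|v-v^*|^\gamma\le\Lambda(v)+\Lambda(v^*)$-type bounds), and that is the delicate part to get right.
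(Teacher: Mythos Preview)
There is a genuine structural gap in your approach, and the paper's proof is organized precisely to avoid it.

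\textbf{Circularity.} You invoke ``$\int m(1+|v^*|^2)F_s(\dd y^*)\le 1+e_0$ by Remark \ref{dec} / Proposition \ref{relat}''. Neither is available here. Remark \ref{dec} controls $\int_E |v|^2 F_t(\dd y)$, \emph{without} the weight $m$; it says nothing about $\int_E m|v|^2 F_t(\dd y)$. And Proposition \ref{relat} is proved \emph{using} Lemma \ref{ttc} (see the paper's proof of Proposition \ref{relat}, which explicitly cites Lemma \ref{ttc} to justify applying \eqref{gew} to $\Phi(m,v)=m\phi(v)$). So the control of the $m$-weighted energy that you need in your Gronwall is exactly what has to be established from scratch.

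\textbf{The $m$-truncation does not close.} With $\Phi_{A,B}(m,v)=(m\land B)\psi_A(v)$, the post-collisional weight is $m''=\frac{mm^*(1+|v^*|^2)}{1+e_0}$, and the only useful upper bound on $(m''\land B)\psi_A(v'')$ is $m''\psi_A(v'')$, which involves the \emph{untruncated} $m$. Integrated against $F_s(\dd y)$ this gives $\int_E m(\cdots)F_s(\dd y)$, a quantity you do not yet know to be finite. So the bounds are not uniform in $B$, and the monotone-convergence step you describe cannot be justified. Your bootstrap to order $2+2\gamma$ (and then $2+3\gamma$, \dots) runs into the same wall at every level; the crude inequality $|v'|^{2+\gamma}\le(|v|+|v^*|)^{2+\gamma}$ does not terminate the regress.

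\textbf{How the paper resolves both issues.} The proof goes through the series expansion $F_t=\sum_{\Upsilon\in\cT}\Gamma_t(J_\Upsilon)$ of Proposition \ref{psum} and works with the partial sums $F_t^k=\sum_{\Upsilon\in\cT_k}\Gamma_t(J_\Upsilon)$. First, an induction over trees (Step~1) shows that each $J_\Upsilon$, hence each $F_t^k$, has finite $m(1+|v|^{2+2\gamma})$-moment --- this provides the a~priori finiteness that your truncation cannot deliver, so the untruncated test functions $\Phi_j(m,v)=m(1+|v|^{2+j\gamma})$ may be used directly on $F_t^k$. Second, the sub-equation \eqref{yyy} for $F_t^k$ together with Remark \ref{tf}-(i) lets one prove \emph{non-circularly} that $\int_E \frac{m(1+|v|^2)}{1+e_0}F_t^k(\dd y)\le 1$ (Step~3); this is the $m$-weighted energy control you were trying to borrow from Proposition \ref{relat}. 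Third, the gain term for $\Phi_1$ is handled not by $|v'|\le|v|+|v^*|$ but by the Povzner inequality $\cA\phi_\gamma(v,v^*)+\cA\phi_\gamma(v^*,v)\le C|v-v^*|^\gamma(|v||v^*|)^{1+\gamma/2}$, which, combined with Step~3, closes a Gronwall at order $2+\gamma$ without ever needing order $2+2\gamma$ on $F_t^k$ beyond Step~1. The resulting bound is uniform in $k$, and monotone convergence as $k\to\infty$ finishes the proof.
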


Next, we handle a few preliminary computations.

\begin{rk}\label{tf}
(i) For all $\Phi\in C(E)$ of the form  $\Phi(m,v)=m\phi(v)$ with $\phi \in C(\rr^3)$,
using the notation $y=(m,v)$ and $y^*=(m^*,v^*)$, it holds that
$$
\cB \Phi(y,y^*)=mm^* \cA\phi(v,v^*)+\kappa m \Lambda(v)\phi(v)
\Big(\frac{m^*(1+|v^*|^2)}{1+e_0} -1\Big).
$$
(ii) Assume furthermore that there is $\alpha \geq 0$ such that for all $v\in \rd$, 
$|\phi(v)|\leq C (1+|v|^{2+\alpha})$. Then $|\cB\Phi(y,y^*)|\leq C [\Lambda(y)+\Lambda(y^*)][1+m(1+|v|^{2+\alpha})]
[1+m^*(1+|v^*|^{2+\alpha})]$.
\end{rk}

\begin{proof}
For (i), it suffices to write  
$$
\cB \Phi(y,y^*)=\Lambda(v)\int_H\Big[\frac{mm^*(1+|v^*|^2)}{1+e_0}\phi(v''(v,v^*,z))-m\phi(v)\Big] \nu_{y,y^*}
(\dd z)=\cB^1 \Phi(y,y^*)+\cB^2 \Phi(y,y^*), 
$$
where 
\begin{align*}
\cB^1\Phi(y,y^*)=&\Lambda(v)\frac{mm^*(1+|v^*|^2)}{1+e_0}\int_H
\Big[\phi(\vs(v,v^*,z))-\phi(v)\Big] \nu_{y,y^*}(\dd z)\\
=&\Lambda(v)q(v,v_*)\frac{mm^*(1+|v^*|^2)}{1+e_0}\intS
\Big[\phi(v'(v,v^*,\sigma))-\phi(v) \Big] \beta_{v,v^*}(\sigma) \dd \sigma,
\end{align*}
which equals $mm^*\cA\phi(v,v^*)$, and where 
\begin{align*}
\cB^2\Phi_A(y,y^*)=&\kappa \Lambda(v) \phi(v) \Big[\frac{mm^*(1+|v^*|^2)}{1+e_0} -m\Big].
\end{align*}

For point (ii), we first observe that $|\cA\phi(v,v^*)|\leq C |v-v^*|^\gamma(1+|v|^{2+\alpha}+|v^*|^{2+\alpha})$,
because $|v'(v,v^*,\sigma)|\leq |v|+|v^*|$, see \eqref{vprime}.
Using next that $|v-v^*|^\gamma\leq |v|^\gamma+|v^*|^\gamma$
and that $\Lambda(y)=\Lambda(v)=(1+e_0)(1+|v|^\gamma)$, we thus find
\begin{align*}
|\cB\Phi(y,y^*)|\leq & C mm^*(|v|^\gamma+|v^*|^\gamma)(1+|v|^{2+\alpha}+|v^*|^{2+\alpha})
+ C \Lambda(v) m(1+|v|^{2+\alpha})(1+m^*(1+|v^*|^2)),
\end{align*}
from which the conclusion easily follows.
\end{proof}

We now can give the

\begin{proof}[Proof of Proposition \ref{relat}]
For any $\phi\in C(\rr^3)$ such that $|\phi(v)|\leq C(1+|v|^2)$, we can apply \eqref{gew}
with $\Phi(m,v)=m\phi(v)$. To check it it properly, first apply \eqref{gew} with 
$\Phi(m,v)=[m\land A] \phi_A(v)$ with $\phi_A(v)=\phi(v)\land A \lor (-A)$ and let $A\to \infty$.
This essentially relies on the facts
that 

\vip

\noindent $\bullet$ $|\cB\Phi(y,y^*)|\leq C [\Lambda(y)+\Lambda(y^*)][1+m(1+|v|^2)][1+m^*(1+|v^*|^2)]$
by Remark \ref{tf}-(ii) (with $\alpha=0$), whence
$ |\cB\Phi(y,y^*)|\leq C
[1+|v|^\gamma+m(1+|v|^{2+\gamma})][1+|v^*|^\gamma+m^*(1+|v^*|^{2+\gamma})]$ and

\vip

\noindent $\bullet$ $t\mapsto \int_E [1\!+\!|v|^\gamma\!+\!m (1\!+\!|v|^{2+\gamma})]F_t(\dd m,\dd v)<\infty$ 
is locally bounded by Lemma \ref{ttc} and Definition \ref{GE}.

\vip

So, applying \eqref{gew} and using the formula of Remark \ref{tf}-(i), we find
\begin{align*}
\int_E m\phi(v)F_t(\dd y)=&\int_E m\phi(v)F_0(\dd y)+\intot\int_E\int_E mm^* \cA\phi(v,v^*) 
F_s(\dd y^*)F_s(\dd y)\dd s\\
&+\kappa \intot\int_E\int_E m \Lambda(v)\phi(v) \Big(\frac{m^*(1+|v^*|^2)}{1+e_0} -1\Big)
F_s(\dd y^*)F_s(\dd y)\dd s.
\end{align*}
This precisely rewrites, by definition of $f_t$,
\begin{align}\label{ww}
\int_{\rr^3} \phi(v)f_t(\dd v)=&\int_{\rr^3} \phi(v)f_0(\dd v)+ \intot \int_{\rr^3}\int_{\rr^3} \cA\phi(v,v^*) 
f_s(\dd v^*)f_s(\dd v) \dd s \\
&+ \kappa \intot  (\Theta_s-1) \Big(\int_{\rr^3} \Lambda(v)\phi(v)f_s(\dd v)\Big) \dd s, \nonumber
\end{align}
where $\Theta_t=(1+e_0)^{-1}\int_{\rr^3} (1+|v|^2)f_t(\dd v)$. 

\vip

But, with $\phi(v)=(1+e_0)^{-1}(1+|v|^2)$, recalling \eqref{ccc}, it holds that
\begin{equation}\label{coco}
\cA\phi(v,v^*)+\cA\phi(v^*,v)=\frac{\kappa}{1+e_0}\big[(1-\alpha)|v|^2+\alpha |v^*|^2 - |v|^2+
(1-\alpha)|v^*|^2+\alpha |v|^2 - |v^*|^2\big]=0.
\end{equation}
Hence applying \eqref{ww} and using a symmetry argument, we find
$$
\Theta_t = 1 + \kappa \intot  (\Theta_s-1) \Big(\int_{\rr^3} \Lambda(v)\phi(v)f_s(\dd v)\Big) \dd s.
$$
Hence $\Theta_t=1$ for all $t\geq 0$ by the Gronwall Lemma, 
because $\Theta_t=(1+e_0)^{-1}\int_E m(1+|v|^2)F_t(\dd m,\dd v)$ and 
$\int_{\rr^3} \Lambda(v)\phi(v)f_t(\dd v)= \int_E m(1+|v|^2)(1+|v|^\gamma)F_t(\dd m,\dd v)$ 
are locally bounded by Lemma \ref{ttc}.

\vip

Coming back to \eqref{ww}, we thus see that for all $\phi\in C_b(\rr^3)$, 
\begin{align*}
\int_{\rr^3} \phi(v)f_t(\dd v)=&\int_{\rr^3} \phi(v)f_0(\dd v)+ \intot \int_{\rr^3}\int_{\rr^3} \cA\phi(v,v^*) 
f_s(\dd v^*)f_s(\dd v) \dd s.
\end{align*}
To complete the proof, it only remains to prove that $f_t(\rd)=1$ for all $t\geq 0$, 
which follows from the choice
$\phi(v)=1$ (for which $\cA \phi(v,v_*)=0$), and to check that 
$\intrd |v|^2 f_t(\dd v)=e_0$ for all $t\geq 0$, which holds true because
$\intrd |v|^2 f_t(\dd v)=(1+e_0)\Theta_t - f_t(\rr^3)$.
\end{proof}

It only remains to prove Lemma \ref{ttc}.

\begin{proof}[Proof of Lemma \ref{ttc}] 
The proof relies on the series expansion
$F_t=\sum_{\Upsilon \in \cT} \Gamma_t(J_\Upsilon(F_0))$, see Proposition \ref{psum}.
We write $J_\Upsilon=J_\Upsilon(F_0)$ for simplicity. We will make use of the functions 
$\Phi_0(m,v)=m(1+|v|^{2})/(1+e_0)$, $\Phi_1(m,v)=m(1+|v|^{2+\gamma})$ and $\Phi_2(m,v)=m(1+|v|^{2+2\gamma})$.

\vip

{\it Step 1.} Here we verify that for all $\Upsilon \in \cT$, all $t\geq 0$,
$D_\Upsilon(t)= \intot\int_E \Phi_2(y) J_\Upsilon(\dd s,\dd y)<\infty$. We proceed by induction
as in the proof of Proposition \ref{psum}, Step 1. First, $D_\circ (t)=\int_E \Phi_2(y)F_0(\dd y)<\infty$
by assumption. Next, we fix $t\geq 0$, 
$\Upsilon \in \cT\setminus\{\circ\}$, we assume by induction that $D_{\Upsilon_\ell}(t)<\infty$ and 
$D_{\Upsilon_r}(t)<\infty$ and prove that $D_\Upsilon(t)<\infty$. We start from
\begin{align*}
D_\Upsilon(t)=&\intot \int_E\int_E\int_H \int_0^s \int_0^s \Phi_2(h(y,y^*,z))\Lambda(y) \nu_{y,y^*}(\dd z) \\
& \hskip5cm J_{\Upsilon_r}(\dd u^*,\dd y^*)e^{-\kappa\Lambda(y^*)(s-u^*)}J_{\Upsilon_\ell}(\dd u,\dd y) 
e^{-\kappa \Lambda(y)(s-u)}\dd s.
\end{align*}
But we see from Remark \ref{tf}-(ii) (with $\alpha=2\gamma$) that
\begin{align*}
\Lambda(y)\int_H \Phi_2(h(y,y^*,z))\nu_{y,y^*}(\dd z)=& \cB\Phi_2(y,y^*)+ \kappa \Lambda(y)\Phi_2(y)\\
\leq& C [\Lambda(y)\!+\!\Lambda(y^*)](1\!+\! \Phi_2(y))(1\!+\!\Phi_2(y^*)).
\end{align*}
Together with the Fubini theorem, this gives us 
\begin{align*}
D_\Upsilon(t)\leq &C \int_E \intot \int_E \intot (1+\Phi_2(y))(1+\Phi_2(y^*))J_{\Upsilon_r}(\dd u^*,\dd y^*)
J_{\Upsilon_\ell}(\dd u,\dd y) \\
&\hskip4cm\int_{u\lor u^*}^t [\Lambda(y)+\Lambda(y^*)]
e^{-\kappa\Lambda(y^*)(s-u^*)} e^{-\kappa \Lambda(y)(s-u)}\dd s\\
\leq & C \int_E \intot \int_E \intot (1+\Phi_2(y))(1+\Phi_2(y^*))
J_{\Upsilon_r}(\dd u^*,\dd y^*)J_{\Upsilon_\ell}(\dd u,\dd y)\\
=& C [J_{\Upsilon_\ell}([0,t]\times E)+D_{\Upsilon_\ell}(t)][J_{\Upsilon_r}([0,t]\times E)+D_{\Upsilon_r}(t)].
\end{align*}
We conclude by induction and since we already know from Step 1 of the proof of Proposition \ref{psum}
that $J_{\Upsilon}([0,t]\times E)\leq \intot\int_E \Lambda(y)J_\Upsilon(\dd s,\dd y) <\infty$
for all $\Upsilon \in \cT$.

\vip

{\it Step 2.} For $k \in \nn^*$, we define 
$F^k_t=\sum_{\Upsilon \in \cT_k} \Gamma_t(J_\Upsilon(F_0))$ as in the proof of Proposition \ref{psum}, Step 3.
We know that $F^k_0=F_0$ and that for all nonnegative $\Phi \in C_b(E)$, see \eqref{yyy} and recall that
$F^k_t(E)\leq1 $,
\begin{equation}\label{toex}
\int_E \Phi(y)F_t^k(\dd y) \leq \int_E \Phi(y)F_0(\dd y) +\intot\int_E\int_E\cB\Phi(y,y^*)F_s^k(\dd y^*)
F_s^k(\dd y)\dd s.
\end{equation}
Also, we immediately deduce from Step 1 that $t\mapsto \int_E \Phi_2(y)F_t^k(\dd y)$ is locally bounded,
as well as $t\mapsto \int_E \Lambda(y)F_t^k(\dd y)$, see Step 2 of the proof of Proposition \ref{psum}.
It is then easy to extend \eqref{toex} to any function $\Phi \in C(E)$ of the form
$\Phi(m,v)=m\phi(v)$, with $0\leq \phi(v)\leq C(1+|v|^{2+\gamma})$. 
This follows from the fact that, by Remark \ref{tf}-(ii) (with $\alpha=\gamma$),
$$
|\cB\Phi(y,y^*)| \leq C[\Lambda(y)+\Lambda(y^*)](1+\Phi_1(y))(1+\Phi_1(y^*))\leq C (1+\Lambda(y)+\Phi_2(y))
(1+\Lambda(y^*)+\Phi_2(y^*)).
$$

\vip

{\it Step 3.} We now verify that $\int_E \Phi_0(y)F_t^k(\dd y)\leq 1$ for all $t\geq 0$.
To this end, we apply \eqref{toex} with $\Phi=\Phi_0$ for which, by Remark \ref{tf}-(i), 
$$
\cB \Phi_0(y,y^*)= m m^*\cA\phi(v,v^*)+\kappa \Lambda(y)\Phi_0(y)[\Phi_0(y^*)-1 ],
$$
where $\phi(v)=(1+|v|^2)/(1+e_0)$. Using that $\cA\phi(v,v^*)+\cA\phi(v^*,v)=0$ (recall
\eqref{coco}) and a symmetry argument, we find that
\begin{align*}
\int_E \Phi_0(y)F_t^k(\dd y) \leq & \int_E \Phi_0(y)F_0(\dd y) +
\intot\int_E\int_E \kappa  \Lambda(y)\Phi_0(y)
[\Phi_0(y^*)-1 ]F_s^k(\dd y^*) F_s^k(\dd y)\dd s\\
=& 1 + \intot \Big(\int_E \kappa  \Lambda(y)\Phi_0(y) F_s^k(\dd y)\Big)\Big( \int_E \Phi_0(y)F_s^k(\dd y)-1  
\Big) \dd s.
\end{align*}
Setting $u_t=\int_E \Phi_0(y)F_t^k(\dd y)-1$ and $\alpha_t=\int_E \kappa  \Lambda(y)\Phi_0(y) F_t^k(\dd y)\geq 0$,
we know that $u$ and $\alpha$ are locally bounded by Step 2 (because $\Phi_0(y)+\Lambda(y)\Phi_0(y) 
\leq C \Phi_2(y)$) 
and that $u_t \leq \intot \alpha_s u_s \dd s$. This implies that $u_t\leq 0$ for all $t\geq 0$,
which was our goal.

\vip

{\it Step 4.} We finally apply \eqref{toex} with $\Phi=\Phi_1$. By Remark \ref{tf}, we see that,
with $\phi(v)=1+|v|^{2+\gamma}$,
$$
\cB\Phi_1(y,y^*)=mm^* \cA\phi(v,v^*)+\kappa \Lambda(y)\Phi_1(y)(\Phi_0(y^*)-1).
$$
Hence
\begin{align*}
\int_E \Phi_1(y)F_t^k(\dd y) \leq & \int_E \Phi_1(y)F_0(\dd y) 
+ \intot \int_E\int_E mm^* \cA\phi(v,v^*) F_s^k(\dd y^*)F^k_s(\dd y) \dd s\\
&+\intot\int_E\int_E \kappa  \Lambda(y)\Phi_1(y)[\Phi_0(y^*)-1 ]F_s^k(\dd y^*) F_s^k(\dd y)\dd s\\
\leq& \int_E \Phi_1(y)F_0(\dd y) + \intot \int_E\int_E mm^* \cA\phi(v,v^*) F_s^k(\dd y^*)F^k_s(\dd y) \dd s
\end{align*}
by Step 3. We next recall a Povzner lemma \cite{p} in the version found in 
\cite[Lemma 2.2-(i)]{MW}: for $\alpha>0$, setting $\phi_\alpha(v)=|v|^{2+\alpha}$, 
there is a $C_\alpha>0$ such that for all $v,v^* \in \rd$, 
$\cA\phi_\alpha(v,v^*)+\cA\phi_\alpha(v^*,v) \leq C_\alpha |v-v^*|^{\gamma} (|v||v^*|)^{1+\alpha/2}$.
Actually, the result of \cite{MW} is much stronger. Since $\phi=1+\phi_\gamma$ and since 
$\cA1=0$, we conclude that 
\begin{align*}
\cA\phi(v,v^*)+\cA\phi(v^*,v)  \leq & C |v-v^*|^\gamma (|v||v^*|)^{1+\gamma/2}\\
\leq& C|v|^{1+3\gamma/2}|v^*|^{1+\gamma/2}+C|v|^{1+\gamma/2}|v^*|^{1+3\gamma/2}\\
\leq& C (1+|v|^{2+\gamma})(1+|v^*|^2)+C(1+|v|^{2})(1+|v^*|^{2+\gamma}),
\end{align*}
so that
$$
mm^*[\cA\phi(v,v^*)+\cA\phi(v^*,v)] \leq C [\Phi_1(y)\Phi_0(y^*)+\Phi_1(y^*)\Phi_0(y)]
$$
Finally, using twice a symmetry argument,
\begin{align*}
\int_E \Phi_1(y)F_t^k(\dd y)
\leq &\int_E \Phi_1(y)F_0(\dd y) + C \intot\int_E \int_E \Phi_1(y)\Phi_0(y^*) F_s^k(\dd y^*)F^k_s(\dd y) \dd s\\
\leq & \int_E \Phi_1(y)F_0(\dd y) + C \intot\int_E\Phi_1(y)F^k_s(\dd y) \dd s
\end{align*}
by Step 3 again.
Hence $\int_E \Phi_1(y)F_t^k(\dd y) \leq  e^{Ct}\int_E \Phi_1(y)F_0(\dd y)$ by the Gronwall lemma.
It then suffices to let $k$ increase to infinity, by monotone convergence, to complete the proof.
\end{proof}

\end{document}